\numberwithin{equation}{section}
\numberwithin{figure}{section}
 \theoremstyle{definition}
 \newtheorem*{defn*}{\protect\definitionname}
  \theoremstyle{plain}
  \newtheorem*{thm*}{\protect\theoremname}
  \theoremstyle{plain}
  \newtheorem*{cor*}{\protect\corollaryname}
\theoremstyle{plain}
\newtheorem{thm}{\protect\theoremname}
  \theoremstyle{plain}
  \newtheorem{prop}[thm]{\protect\propositionname}
  \theoremstyle{plain}
  \newtheorem{lem}[thm]{\protect\lemmaname}
  \theoremstyle{plain}
  \newtheorem{cor}[thm]{\protect\corollaryname}
  \theoremstyle{definition}
  \newtheorem{example}[thm]{\protect\examplename}
  \providecommand{\corollaryname}{Corollary}
  \providecommand{\definitionname}{Definition}
  \providecommand{\examplename}{Example}
  \providecommand{\lemmaname}{Lemma}
  \providecommand{\propositionname}{Proposition}
  \providecommand{\theoremname}{Theorem}
\providecommand{\theoremname}{Theorem}
\begin{document}
\title[Cylinders in Mori Fiber Spaces]{Cylinders in Mori Fiber Spaces: Forms of the quintic del Pezzo threefold}
\author{Adrien Dubouloz}
\address{IMB UMR5584, CNRS, Univ. Bourgogne Franche-Comté, F-21000 Dijon, France} \email{adrien.dubouloz@u-bourgogne.fr }
\author{Takashi Kishimoto}
\address{Department of Mathematics, Faculty of Science, Saitama University, Saitama 338-8570, Japan} \email{tkishimo@rimath.saitama-u.ac.jp}

\thanks{The second author was partially funded by Grant-in-Aid for Scientific Research of JSPS No. 15K04805. The research was initiated during a visit of the first author at Saitama University and continued during a stay of the second author at University of Burgundy as a CNRS Research Fellow. The authors thank these institutions for their generous support and the excellent working conditions offered.} 

\subjclass[2000]{14E30; 14J30; 14J45; 14R10; 14R25; }

\keywords{Fano threefold, Mori Fiber Space, Sarkisov link; Cremona involutions;
Cylinders. }
\begin{abstract}
Motivated by the general question of existence of open $\mathbb{A}^{1}$-cylinders
in higher dimensional projective varieties, we consider the case of
Mori Fiber Spaces of relative dimension three, whose general closed
fibers are isomorphic to the quintic del Pezzo threefold $V_{5}$,
the smooth Fano threefold of index two and degree five. We show that
the total spaces of these Mori Fiber Spaces always contain relative
$\mathbb{A}^{2}$-cylinders, and we characterize those admitting relative
$\mathbb{A}^{3}$-cylinders in terms of the existence of certain special
lines in their generic fibers. 
\end{abstract}

\maketitle

\section*{Introduction}

An $\mathbb{A}_{k}^{r}$-\emph{cylinder} in a normal algebraic variety
defined over a field $k$ is a Zariski open subset $U$ isomorphic
to $Z\times\mathbb{A}_{k}^{r}$ for some algebraic variety $Z$ defined
over $k$. In the case where $k=\overline{k}$ is algebraically closed,
normal projective varieties $V$ containing $\mathbb{A}_{\overline{k}}^{1}$-cylinders
have received a lot of attention recently due to the connection between
unipotent group actions on their affine cones and polarized $\mathbb{A}_{\overline{k}}^{1}$-cylinders
in them, that is $\mathbb{A}_{\overline{k}}^{1}$-cylinders whose
complements are the supports of effective $\mathbb{Q}$-divisors linearly
equivalent to an ample divisor on $V$ (cf. \cite{KPZ1,KPZ2}). Certainly,
the canonical divisor $K_{V}$ of a normal projective variety containing
an $\mathbb{A}_{\overline{k}}^{r}$-cylinder for some $r\geq1$ is
not pseudo-effective. Replacing $V$ if necessary by a birational
model with at most $\mathbb{Q}$-factorial terminal singularities,
\cite{BCHM} guarantees the existence of a suitable $K_{V}$-MMP $V\dashrightarrow X$
whose output $X$ is equipped with a structure of Mori Fiber Space
$f:X\rightarrow Y$ over some lower dimensional normal projective
variety $Y$. Since an $\mathbb{A}_{\overline{k}}^{1}$-cylinder in
$X$ can always be transported back in the initial variety $V$ \cite[Lemma 9]{DK5},
total spaces of Mori Fiber Spaces form a natural restricted class
in which to search for varieties containing $\mathbb{A}_{\overline{k}}^{1}$-cylinders. 

In the case where $\dim Y=0$, $X$ is a Fano variety of Picard number
one. The only smooth Fano surface of Picard number is the projective
plane $\mathbb{P}_{\overline{k}}^{2}$ which obviously contains $\mathbb{A}_{\overline{k}}^{1}$-cylinders.
Several families of examples of smooth Fano varieties of dimension
$3$ and $4$ and Picard number one containing $\mathbb{A}_{\overline{k}}^{1}$-cylinders
have been constructed \cite{KPZ4,PZ14,PZ15}. The question of existence
of $\mathbb{A}_{\overline{k}}^{1}$-cylinders in other possible outputs
of MMPs was first considered in \cite{DK3,DK4}, in which del Pezzo
fibrations $f:X\rightarrow Y$, which correspond to the case where
$\dim Y=\dim X-2>0$, were extensively studied. In such a relative
context, it is natural to shift the focus to cylinders which are compatible
with the fibration structure: 
\begin{defn*}
Let $f:X\to Y$ be a morphism between normal algebraic varieties defined
over a field $k$ and let $U\simeq Z\times\mathbb{A}_{k}^{r}$ be
an $\mathbb{A}_{k}^{r}$-cylinder inside $X$. We say that $U$ is
\emph{vertical with respect to} $f$ if the restriction $f|_{U}$
factors as 
\[
f\mid_{U}=h\circ\mathrm{pr}_{Z}:U\simeq Z\times\mathbb{A}_{k}^{r}\stackrel{\mathrm{pr}_{Z}}{\longrightarrow}Z\stackrel{h}{\longrightarrow}Y
\]
for a suitable morphism $h:Z\rightarrow Y$. 
\end{defn*}
In the present article, we initiate the study of existence of vertical
$\mathbb{A}^{1}$-cylinders in Mori Fiber Spaces $f:X\rightarrow Y$
of relative dimension three, whose general fibers are smooth Fano
threefolds. Each smooth Fano threefold of Picard number one can appear
as a closed fiber of a Mori Fiber Space, and among these, it is natural
to first restrict to classes which contain a cylinder of the maximal
possible dimension, namely the affine space $\mathbb{A}_{\overline{k}}^{3}$,
and expect that some suitable sub-cylinders of these fiber wise maximal
cylinders could arrange into a vertical cylinder with respect to $f$,
possibly of smaller relative dimension. The only four classes of Fano
threefold of Picard number one containing $\mathbb{A}_{\overline{k}}^{3}$
are $\mathbb{P}_{\overline{k}}^{3}$, the quadric $\mathbb{Q}^{3}$,
the del Pezzo quintic threefold $V_{5}$ of index two and degree five
\cite{FuNa89,FuNa89-2}, and a four dimensional family of prime Fano
threefolds $V_{22}$ of genus twelve \cite{Fu93,Pr}. \\

The existence inside $X$ of a vertical $\mathbb{A}^{r}$-cylinder
with respect to $f:X\rightarrow Y$ translates equivalently into that
of an $\mathbb{A}_{K}^{r}$-cylinder inside the fiber $X_{\eta}$
of $f$ over the generic point $\eta$ of $Y$, considered as a variety
defined over the function field $K$ of $Y$ \cite{DK4}. We are therefore
led to study the existence of $\mathbb{A}_{K}^{r}$-cylinders inside
$K$-forms of the aforementioned Fano threefolds over non-closed fields
$K$ of characteristic zero, that is, smooth projective varieties
defined over $K$ whose base extensions to an algebraic closure $\overline{K}$
are isomorphic over $\overline{K}$ to one of these Fano threefolds.
The case of $\mathbb{P}^{3}$ is easily dispensed: a $K$-form $V$
of $\mathbb{P}^{3}$ contains an $\mathbb{A}_{K}^{3}$-cylinder if
and only if it has a $K$-rational point hence if and only if it is
the trivial $K$-form $\mathbb{P}_{K}^{3}$. So equivalently, a Mori
Fiber Space $f:X\rightarrow Y$ whose general closed fibers are isomorphic
to $\mathbb{P}_{\overline{k}}^{3}$ contains a vertical $\mathbb{A}_{\overline{k}}^{3}$-cylinder
if and only if it has a rational section. The case of the quadric
$\mathbb{Q}^{3}$ is already more intricate: one can deduce from \cite{Fu93}
that a $K$-form $V$ of $\mathbb{Q}^{3}$ contains $\mathbb{A}_{K}^{3}$
if and only if it has a hyperplane section defined over $K$ which
is a $K$-rational quadric cone. In this article, we establish a complete
characterization of the existence of $\mathbb{A}_{K}^{r}$-cylinders
in forms of $V_{5}$ which can be summarized as follows: 
\begin{thm*}
Let $K$ be a field of characteristic zero and let $Y$ be a $K$-form
of $V_{5}$. Then $Y$ always contains an $\mathbb{A}_{K}^{2}$-cylinder,
and it contains an $\mathbb{A}_{K}^{3}$-cylinder if and only if it
contains a curve $\ell\simeq\mathbb{P}_{K}^{1}$ of anticanonical
degree $-K_{Y}\cdot\ell=2$ and with normal bundle $\mathcal{N}_{\ell/Y}\simeq\mathcal{O}_{\mathbb{P}_{K}^{1}}(-1)\oplus\mathcal{O}_{\mathbb{P}_{K}^{1}}(1)$. 
\end{thm*}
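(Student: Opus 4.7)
The plan is to analyze the three claims of the theorem by leveraging the birational geometry of $V_{5}$ over $\overline{K}$, in particular the Furushima--Nakayama description $V_{5}\setminus H_{\ell_{0}}\cong\mathbb{A}_{\overline{K}}^{3}$, where $H_{\ell_{0}}$ is the unique hyperplane section of $V_{5}$ that is singular along the special line $\ell_{0}$ of normal bundle $\mathcal{O}(-1)\oplus\mathcal{O}(1)$, together with the Sarkisov links obtained by blowing up lines and conics. The Hilbert scheme of lines on $Y$ is a $K$-form of $\mathbb{P}^{2}$ whose stratification by the normal-bundle type of the parametrized lines descends canonically to $K$; this is the geometric bridge between the hypothesis ``$Y$ contains a special line over $K$'' and the existence of a cylinder.

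For the ``if'' direction of the $\mathbb{A}_{K}^{3}$-cylinder statement, I would construct the Sarkisov link over $K$ starting from a given $\ell\cong\mathbb{P}_{K}^{1}$ with the prescribed normal bundle. First blow up $\ell$ by $\sigma:\tilde{Y}\to Y$; the extremal ray opposite to the $\sigma$-exceptional divisor is Galois-invariant, hence $K$-rational, and the normal bundle hypothesis forces its contraction $\tau:\tilde{Y}\to W$ to be a divisorial one collapsing the strict transform of a canonically determined hyperplane section $H_{\ell}\subset Y$ to a single closed point $w\in W$. Base-changing reproduces the Furushima--Nakayama blow-down $V_{5}\dashrightarrow\mathbb{P}_{\overline{K}}^{3}$, so $W$ is a $K$-form of $\mathbb{P}^{3}$ carrying the $K$-rational point $w$; hence $W\cong\mathbb{P}_{K}^{3}$, and the induced open immersion $Y\setminus H_{\ell}\hookrightarrow W\setminus\{w\}$ identifies the complement with the standard affine chart $\mathbb{A}_{K}^{3}\subset\mathbb{P}_{K}^{3}$.

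For the converse, suppose $Y$ contains an $\mathbb{A}_{K}^{3}$-cylinder $Y\setminus D$ for some effective reduced divisor $D$. The crux is to prove that, over $\overline{K}$, any smooth compactification of $\mathbb{A}^{3}$ by $V_{5}$ is the Furushima--Nakayama one. I would establish this by a log-MMP analysis of the pair $(V_{5},D_{\overline{K}})$, using that $D_{\overline{K}}$ must support an ample divisor with affine complement, together with a case analysis of extremal rays on the blow-up of $V_{5}$ along candidate irreducible components of $D_{\overline{K}}$. Once we know $D_{\overline{K}}=H_{\ell_{0}}$, the line $\ell_{0}$ is recovered as the singular locus of $D$, hence is defined over $K$ with the right normal bundle. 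The identification $Y\setminus D\cong\mathbb{A}_{K}^{3}$ then extends to a birational map $\mathbb{P}_{K}^{3}\dashrightarrow Y$ whose resolution has $K$-rational exceptional structure over $\ell_{0}$, forcing $\ell_{0}\cong\mathbb{P}_{K}^{1}$.

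The most delicate part is the unconditional existence of an $\mathbb{A}_{K}^{2}$-cylinder, since a generic $K$-form $Y$ of $V_{5}$ may contain neither a $K$-rational line nor a $K$-rational conic. My plan is to exploit the classical fact that every $K$-form of $V_{5}$ carries a $K$-rational point $p$, and to construct cylinders not from individual lines but from canonical pencils. Through $p$ one has a linear system of hyperplane sections of $Y$ whose generic member is a smooth quintic del Pezzo surface defined over $K$; the generic such surface carries an $\mathbb{A}^{1}$-cylinder by the classical theory of cylinders on del Pezzo surfaces, and the relative version of this construction yields a vertical $\mathbb{A}_{K}^{2}$-cylinder in $Y$. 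The main obstacle I anticipate here is descent-theoretic: ensuring that the pencil, the fiberwise cylinder structure, and the trivialization of the $\mathbb{A}^{2}$-bundle all descend to $K$ without a twist, which will require a careful Galois-cohomological argument tailored to the Severi--Brauer schemes parametrizing lines and conics on $Y$.
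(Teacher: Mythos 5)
Your proposal has genuine gaps in each of the three parts. The most serious one concerns the unconditional $\mathbb{A}_{K}^{2}$-cylinder. First, your starting premise is wrong: a $K$-form $Y$ of $V_{5}$ \emph{always} contains a line defined over $K$ with trivial normal bundle. The paper proves this by observing that the conic $C\subset\mathcal{H}(Y)$ parametrizing special lines is Galois-invariant, hence defined over $K$; it acquires a point over a quadratic extension $K'$, and either that point is Galois-fixed or the two conjugate tangent lines to $C_{K'}$ meet in a Galois-fixed point of $\mathcal{H}(Y)\setminus C$. Either way $\mathcal{H}(Y)\simeq\mathbb{P}_{K}^{2}$, one gets a $K$-rational line with trivial normal bundle, and projection from it maps $Y$ birationally onto a smooth quadric $Q\subset\mathbb{P}_{K}^{4}$, restricting to an isomorphism from the complement of the surface swept out by incident lines onto $Q\setminus Q_{0}\simeq\{xv-yu=1\}$, which visibly contains $\mathrm{Spec}(K[x^{\pm1}][y,u])$. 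Second, even granting your pencil of quintic del Pezzo surfaces through a $K$-point with fiberwise $\mathbb{A}^{1}$-cylinders, a relative $\mathbb{A}^{1}$-cylinder over a pencil is an open set of the form $Z\times\mathbb{A}^{1}$ with $\dim Z=2$, i.e.\ an $\mathbb{A}^{1}$-cylinder, not an $\mathbb{A}^{2}$-cylinder; you never explain how to raise the relative dimension, and no Galois-cohomological refinement will do that.

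For the $\mathbb{A}_{K}^{3}$ equivalence: in the converse direction you implicitly assume the boundary $D_{\overline{K}}$ is the non-normal quintic del Pezzo surface singular along a special line, but Furushima's classification (which the paper cites rather than reproves --- your plan to rederive it by a log-MMP case analysis is left entirely unexecuted) has a second case where $D_{\overline{K}}$ is \emph{normal} with a single $A_{4}$-point $q$; there the special line is not $\mathrm{Sing}(D)$ but the unique special line through the $K$-rational point $q$, and a separate Galois-invariance argument is needed. In the forward direction, the link you describe is not correct as stated: contracting the strict transform of $H_{\ell}$ to a point $w$ would identify $Y\setminus H_{\ell}$ with the complement in $W$ of the \emph{image of the exceptional divisor over $\ell$}, not with $W\setminus\{w\}$ (which is in any case not an affine chart), and the factorization of the Furushima--Nakayama map involves intermediate flops you have suppressed. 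The paper avoids these issues by running the Sarkisov link in the opposite direction: starting from a $\mathbb{P}^{1}$-bundle $\mathbb{P}(\mathcal{E})\to\mathbb{P}_{K}^{2}$ attached to a quadratic involution of $\mathbb{P}_{K}^{2}$, it realizes the double projection from a $K$-rational point $y$ lying on at most two lines (such $y$ exists iff a special line is defined over $K$), obtains $Y\setminus C\to\mathbb{P}_{K}^{2}\setminus Z$ as a locally trivial $\mathbb{A}^{1}$-bundle with $Z$ supported on at most two points, and takes the preimage of $\mathbb{P}_{K}^{2}\setminus L\simeq\mathbb{A}_{K}^{2}$ for a line $L\supset\mathrm{Supp}(Z)$. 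You would need to either adopt that construction or substantially repair yours.
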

An irreducible curve $\ell$ of anticanonical degree $-K_{Y}\cdot\ell=2$
on a $K$-form $Y$ of $V_{5}$ becomes after base extension to an
algebraic closure $\overline{K}$ of $K$ a usual line on $Y_{\overline{K}}\simeq V_{5}$
embedded into $\mathbb{P}_{\overline{K}}^{6}$ via its half-anticanonical
complete linear system. By combining the previous characterization
with a closer study of the Hilbert scheme of such curves $\ell$ on
$Y$ (see $\S$\ref{subsec:Lines-on-fom-Hilbert}), we derive the
following result (see Corollary \ref{cor:A3-overC1-field}):
\begin{cor*}
Let $\overline{k}$ be an algebraically closed field of characteristic
zero and let $f:X\rightarrow C$ be a Mori Fiber Space over a curve
$C$ defined over $\overline{k}$,  whose general closed fibers are
quintic del Pezzo threefolds $V_{5}$. Then $X$ contains a vertical
$\mathbb{A}_{\overline{k}}^{3}$-cylinder with respect to $f$. 
\end{cor*}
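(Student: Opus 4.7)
The plan is to reduce the existence of a vertical $\mathbb{A}_{\overline{k}}^{3}$-cylinder in $X \to C$ to the existence of a particular $K$-rational curve on the generic fiber, and then to exhibit such a curve using the $C_{1}$-property of $K=\overline{k}(C)$.

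First I would pass to the generic fiber. Since $f:X\to C$ has general closed fibers isomorphic to $V_{5}$, the generic fiber $X_{\eta}$ is a smooth projective variety over the function field $K=\overline{k}(C)$ whose base change to $\overline{K}$ is a quintic del Pezzo threefold; thus $X_{\eta}$ is a $K$-form $Y$ of $V_{5}$. As recalled in the introduction (cf.~\cite{DK4}), a vertical $\mathbb{A}_{\overline{k}}^{r}$-cylinder in $X$ with respect to $f$ is equivalent to an $\mathbb{A}_{K}^{r}$-cylinder inside $X_{\eta}$. So the corollary will follow once we produce an $\mathbb{A}_{K}^{3}$-cylinder in the $K$-form $Y$.

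By the main theorem of the paper, producing an $\mathbb{A}_{K}^{3}$-cylinder on $Y$ amounts to finding a curve $\ell\simeq\mathbb{P}_{K}^{1}$ with $-K_{Y}\cdot \ell=2$ and normal bundle $\mathcal{N}_{\ell/Y}\simeq\mathcal{O}_{\mathbb{P}_{K}^{1}}(-1)\oplus\mathcal{O}_{\mathbb{P}_{K}^{1}}(1)$, that is, a $K$-rational \emph{special} line in the half-anticanonical embedding. I would therefore consider the Hilbert scheme $H(Y)$ of such special lines on $Y$. Over $\overline{K}$, it is classical that the family of all lines on $V_{5}$ is parametrized by $\mathbb{P}_{\overline{K}}^{2}$, and that the sublocus of special lines (those with splitting type $(-1,1)$) is a smooth rational curve sitting inside this $\mathbb{P}^{2}$; concretely it is cut out by a conic. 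Consequently $H(Y)$ is a $K$-form of this rational curve, in particular a smooth, geometrically rational curve of genus zero over $K$, i.e.\ a Brauer--Severi variety of dimension one (equivalently a smooth conic over $K$). This is exactly the content the paper sets up in \S\ref{subsec:Lines-on-fom-Hilbert}, which I would invoke.

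To conclude, I would apply Tsen's theorem: the field $K=\overline{k}(C)$ is a $C_{1}$-field of characteristic zero, so its Brauer group is trivial, and every Brauer--Severi variety (equivalently every smooth conic) over $K$ has a $K$-rational point. Therefore $H(Y)(K)\neq\emptyset$, which produces a special $K$-rational line $\ell\subset Y$ of the required type, hence an $\mathbb{A}_{K}^{3}$-cylinder in $Y=X_{\eta}$, and finally a vertical $\mathbb{A}_{\overline{k}}^{3}$-cylinder in $X$ with respect to $f$. The main obstacle in carrying out this plan is not the cylinder construction itself, which is reduced to the main theorem, but rather the geometric identification underlying \S\ref{subsec:Lines-on-fom-Hilbert}: one must check carefully that the scheme of special lines on an arbitrary $K$-form of $V_{5}$ is indeed a well-behaved twisted form of a genus-zero rational curve (and not, say, a higher-genus or disconnected parameter space after descent), so that Tsen's theorem can be applied cleanly.
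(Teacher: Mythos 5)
Your proposal is correct and follows essentially the same route as the paper: reduce to the generic fiber $X_{\eta}$, which is a form of $V_{5}$ over the $C_{1}$-field $K=\overline{k}(C)$, invoke the characterization of $\mathbb{A}_{K}^{3}$-cylinders via special lines together with the fact that special lines are parametrized by a smooth conic in $\mathcal{H}(X_{\eta})\simeq\mathbb{P}_{K}^{2}$, and conclude by Tsen's theorem. The descent issue you flag at the end is exactly what the paper settles in \S\ref{subsec:Lines-on-fom-Hilbert} (Galois invariance of the conic of special lines), so there is no gap.
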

Section \ref{sec:1} contains a brief recollection on the quintic
del Pezzo threefold $V_{5}$ and its Hilbert scheme of lines. In Section
\ref{sec:2}, we establish basic geometric properties of forms of
$V_{5}$ and describe their Hilbert schemes of lines. We also describe
an adaptation to non-closed fields of a standard construction of $V_{5}$
as the variety of trisecant lines to a Veronese surface in $\mathbb{P}^{4}$,
from which we derive for suitable fields the existence of forms of
$V_{5}$ which do not contain any line with normal bundle $\mathcal{O}_{\mathbb{P}^{1}}(-1)\oplus\mathcal{O}_{\mathbb{P}^{1}}(1)$.
The technical core of the article is then Section \ref{sec:3}: we
give a new construction of a classical rational map, called the double
projection from a rational point of a form of $V_{5}$ in the form
of a Sarkisov link from a $\mathbb{P}^{1}$-bundle over $\mathbb{P}^{2}$
explicitly determined by the base locus of a quadratic birational
involution of $\mathbb{P}^{2}$. The main results concerning $\mathbb{A}^{r}$-cylinders
in forms of $V_{5}$ are then derived from this construction in Section
\ref{sec:4}. 

\section{\label{sec:1}Geometry of the smooth quintic del Pezzo threefold}

In the rest of this article, unless otherwise stated, the notations
$k$ and $\overline{k}$ refer respectively to a field of characteristic
zero and a fixed algebraic closure of $k$. In this section, we recall
without proof classical descriptions and properties of the quintic
del Pezzo threefold $V_{5}$ over $\overline{k}$ and of its Hilbert
scheme of lines. 

\subsection{Two classical descriptions of $V_{5}$ }

A quintic del Pezzo threefold $V_{5}$ over $\overline{k}$ is a smooth
projective threefold whose Picard group is isomorphic to $\mathbb{Z}$,
generated by an ample class $H$ such that $-K_{V_{5}}=2H$ and $H^{3}=5$.
In other words, $V_{5}$ is a smooth Fano threefold of index two and
degree five. 

\subsubsection{\label{subsec:Iskov-link} Sarkisov links to a smooth quadric in
$\mathbb{P}^{4}$ }

Let us first recall the classical description due to Iskovskikh \cite[Chapter II, \S 1.6]{Is80}.
Letting $H$ be an ample class such that $-K_{V_{5}}=2H$, the complete
linear system $|H|$ defines a closed embedding $\Phi_{|H|}:V_{5}\hookrightarrow\mathbb{P}_{\overline{k}}^{6}$.
A general hyperplane section of $V_{5}$ contains a line $\ell$ whose
normal bundle in $V_{5}$ is trivial, and the projection from $\ell$
induces a birational map $V_{5}\dashrightarrow Q$ onto a smooth quadric
$Q\subset\mathbb{P}_{\overline{k}}^{4}$. The inverse map $Q\dashrightarrow V_{5}$
can be described as the blow-up of $Q$ along a rational normal cubic
$C\subset Q$ contained in a smooth hyperplane section $Q_{0}\simeq\mathbb{P}_{\overline{k}}^{1}\times\mathbb{P}_{\overline{k}}^{1}$
of $Q$, followed by the contraction of the proper transform $Q_{0}'$
of $Q_{0}$ onto the line $\ell$. In sum, the projection from the
line $\ell$ induces a Sarkisov link \[\xymatrix@R=3em@C=2em{ & Q_0' \ar@{^{(}->}[rr] \ar[dl] & & \tilde{V}_5 \ar[dl]_{q} \ar[dr]^{q'} & & Z' \ar@{_{(}->}[ll] \ar[dr] \\ \ell \ar@{^{(}->}[rr] & & V_5 \ar@{-->}@/_10pt/[rr]_{|H-\ell|} & & Q & & C \ar@{_{(}->}[ll]}\]where
$q:\tilde{V}_{5}\rightarrow V_{5}$ is the blow-up of $V_{5}$ along
$\ell$ with exceptional divisor $Q_{0}'$ and $q':\tilde{V}_{5}\rightarrow Q$
is the contraction onto the curve $C$ of the proper transform $Z'$
of the surface $Z\subset V_{5}$ swept out by lines in $V_{5}$ intersecting
$\ell$. 

Since the automorphism group $\mathrm{PGL}_{5}(\overline{k})$ of
$\mathbb{P}_{\overline{k}}^{4}$ acts transitively on the set of flags
$C\subset Q_{0}\subset Q$, it follows that over an algebraically
closed field $\overline{k}$, all smooth Fano threefold of Picard
number one, index two and degree five embedded into $\mathbb{P}_{\overline{k}}^{6}$
by their half-anticanonical complete linear system are projectively
equivalent. 

\subsubsection{\label{subsec:QuasiHom} Quasi-homogeneous space of $\mathrm{PGL}_{2}(\overline{k})$}

We now recall an alternative description of $V_{5}$ due to Mukai-Umemura
\cite{MuUm83} (see also \cite[\S 5.1]{KPS18}). Let $M_{d}=\mathrm{Sym}^{d}(\overline{k}^{2})^{\vee}\simeq\overline{k}[x,y]_{(d)}$
be the space of homogeneous polynomials of degree $d$ with coefficients
in $\overline{k}$. The natural action of $\mathrm{GL}_{2}(\overline{k})$
on $M_{1}$ induces a linear action on $M_{6}$, hence an action of
$\mathrm{PGL}_{2}(\overline{k})$ on $\mathbb{P}(M_{6})\simeq\mathbb{P}_{\overline{k}}^{6}$.
We then have the following description:
\begin{prop}
The Fano threefold $V_{5}$ is isomorphic to the closure $\overline{\mathrm{PGL}_{2}(\overline{k})\cdot[\phi]}$
of the class of the polynomial $\phi=xy(x^{4}+y^{4})\in M_{6}$. Furthermore,
the $\mathrm{PGL}_{2}(\overline{k})$-orbits on $V_{5}$ are described
as follows: 

\begin{enumerate}[label=$($\roman*$)$]

\item  The open orbit $O=\mathrm{PGL}_{2}(\overline{k})\cdot[\phi]$
with stabilizer equal to the binary octahedral group,

\item  The $2$-dimensional orbit $S_{2}=\mathrm{PGL}_{2}(\overline{k})\cdot[xy^{5}]$,
which is neither open nor closed, with stabilizer equal to the diagonal
torus $\mathbb{T}$. 

\item  The $1$-dimensional closed orbit $C_{6}=\mathrm{PGL}_{2}(\overline{k})\cdot[x^{6}]$
with stabilizer equal to the Borel subgroup $B$ of upper triangular
matrices. 

\end{enumerate}
\end{prop}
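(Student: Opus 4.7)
My plan is to prove the proposition in three phases: (i) compute the stabilizer of each of the three distinguished polynomials, (ii) identify the closure $\overline{O}:=\overline{\mathrm{PGL}_{2}(\overline{k})\cdot[\phi]}$ with $V_{5}$, and (iii) enumerate the orbits lying in $\overline{O}$. I expect phase (ii) to be the main obstacle.

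For phase (i), I would identify a point of $\mathbb{P}(M_{6})$ with an effective divisor of degree $6$ on $\mathbb{P}^{1}_{\overline{k}}$, so that its $\mathrm{PGL}_{2}$-stabilizer coincides with that of the divisor. For $\phi=xy(x^{4}+y^{4})$ the six roots $[0:1]$, $[1:0]$ and the four solutions of $(x/y)^{4}=-1$ are, under stereographic projection, the vertices of a regular octahedron on the Riemann sphere, so the stabilizer in $\mathrm{PGL}_{2}$ is the octahedral group, lifting to the binary octahedral subgroup of $\mathrm{SL}_{2}$. For $[xy^{5}]$ the two roots $[0:1]$ and $[1:0]$ have distinct multiplicities, forcing the stabilizer to fix each and hence to equal the diagonal torus $\mathbb{T}$. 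For $[x^{6}]$ the unique root has pointwise stabilizer the Borel subgroup $B$. Since $\dim\mathrm{PGL}_{2}=3$, the three orbits have dimensions $3$, $2$, $1$ respectively.

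For phase (ii), $\overline{O}$ is an irreducible $\mathrm{PGL}_{2}$-invariant projective threefold in $\mathbb{P}^{6}=\mathbb{P}(M_{6})$; by the Iskovskikh classification recalled in $\S$\ref{subsec:Iskov-link} it suffices to show that $\overline{O}$ is smooth, has Picard number one, and has degree $5$ in $\mathbb{P}^{6}$. The most efficient route is via the Mukai model of $V_{5}$: under the Clebsch--Gordan decomposition $\Lambda^{2}M_{4}\simeq M_{6}\oplus M_{2}$ of $\mathrm{SL}_{2}$-representations, the $\mathrm{PGL}_{2}$-invariant linear subspace $\mathbb{P}(M_{6})\subset\mathbb{P}(\Lambda^{2}M_{4})=\mathbb{P}^{9}$ cuts out on the Pl\"ucker-embedded $\mathrm{Gr}(2,M_{4})$ a smooth $\mathrm{PGL}_{2}$-invariant threefold of degree $5$ and Picard number one, which by irreducibility and dimension must coincide with $\overline{O}$. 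Failing this, smoothness and degree can be verified directly in explicit affine coordinates around the closed orbit $C_{6}$, with $\mathrm{PGL}_{2}$-equivariance reducing the calculation to a neighbourhood of $[x^{6}]$.

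For phase (iii), $O$ is the open orbit by dimension. The diagonal 1-parameter subgroup $\lambda_{t}=\mathrm{diag}(t,t^{-1})$ acts as $\lambda_{t}\cdot\phi=t^{-4}(x^{5}y+t^{8}xy^{5})$, whose projective limit as $t\to 0$ is $[x^{5}y]\in S_{2}$, so $S_{2}\subset\overline{O}$. A further unipotent degeneration by $\bigl(\begin{smallmatrix}1 & 0\\ s & 1\end{smallmatrix}\bigr)$ sends $[x^{5}y]$ to $[x^{5}(y+sx)]=[sx^{6}+x^{5}y]$, whose limit as $s\to\infty$ is $[x^{6}]\in C_{6}$, giving $C_{6}\subset\overline{O}$ as well. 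To rule out the remaining two-dimensional orbits $\mathrm{PGL}_{2}\cdot[x^{2}y^{4}]$ and $\mathrm{PGL}_{2}\cdot[x^{3}y^{3}]$ --- the only polynomials besides those above with a positive-dimensional stabilizer, since such a stabilizer must fix two points of $\mathbb{P}^{1}$ --- I would observe that $\overline{S_{2}}$ is already an irreducible $\mathrm{PGL}_{2}$-invariant surface in $\overline{O}$, concretely the tangential scroll of the rational normal sextic $C_{6}$ (because $[x^{5}y]$ is the projective tangent direction to $C_{6}$ at $[x^{6}]$), and then verify directly in the Mukai model that neither $[x^{2}y^{4}]$ nor $[x^{3}y^{3}]$ satisfies the Pl\"ucker equations defining $V_{5}$. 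This forces $\overline{O}\setminus O=\overline{S_{2}}=S_{2}\sqcup C_{6}$, completing the classification.
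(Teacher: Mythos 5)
The paper does not prove this proposition: it is explicitly recalled ``without proof'' from Mukai--Umemura \cite{MuUm83} (see also \cite[\S 5.1]{KPS18}), so there is no internal argument to compare against. Judged on its own terms, your outline follows the standard Mukai--Umemura strategy, and phases (i) and (iii) are essentially sound: the stabilizer computations are correct (including the careful distinction between the octahedral group in $\mathrm{PGL}_{2}$ and its binary cover in $\mathrm{SL}_{2}$, which is more precise than the statement itself), the torus and unipotent degenerations showing $S_{2},C_{6}\subset\overline{O}$ are correct, and the observation that any orbit in $\overline{O}\setminus O$ must have positive-dimensional stabilizer and hence be supported on at most two points of $\mathbb{P}^{1}$ correctly reduces the boundary analysis to the four classes $[x^{6}],[x^{5}y],[x^{4}y^{2}],[x^{3}y^{3}]$.

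The genuine gap is in phase (ii), which you yourself flag as the main obstacle but then dispatch too quickly. The assertion that the $\mathrm{PGL}_{2}$-invariant linear section $\mathrm{Gr}(2,M_{4})\cap\mathbb{P}(M_{6})$ ``by irreducibility and dimension must coincide with $\overline{O}$'' does not follow from invariance alone: $\mathbb{P}(M_{6})$ contains infinitely many pairwise distinct irreducible $\mathrm{PGL}_{2}$-invariant threefolds, namely the orbit closures $\overline{\mathrm{PGL}_{2}\cdot[\psi]}$ for the various sextics $\psi$ with finite stabilizer (e.g.\ $x^{6}+y^{6}$ gives one different from that of $\phi$). To identify the linear section with $\overline{O}$ you must actually verify that $[\phi]$ satisfies the Pl\"ucker equations, i.e.\ that the image of $\phi$ under the equivariant inclusion $M_{6}\hookrightarrow\Lambda^{2}M_{4}$ is a decomposable bivector; and you must separately verify that this particular (non-generic) codimension-three linear section of $\mathrm{Gr}(2,5)$ is smooth, since genericity is not available for an invariant subspace. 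These are finite computations, but they constitute the actual content of the Mukai--Umemura theorem, so as written the core of the proof is deferred rather than carried out; the same applies to the exclusion of $[x^{4}y^{2}]$ and $[x^{3}y^{3}]$ in phase (iii), which rests on the same unverified model. Note also that since $O\simeq\mathrm{PGL}_{2}/S_{4}$ is affine, $\overline{O}\setminus O$ is automatically pure of codimension one, which slightly streamlines but does not remove the need for that last check.
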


It follows from this description that the automorphism $\mathrm{Aut}(V_{5})$
is isomorphic to $\mathrm{PGL}_{2}(\overline{k})$. We also observe
that $C_{6}$ is a normal rational sextic curve and that the closure
$\overline{S}_{2}=S_{2}\cup C_{6}$ of $S_{2}$ is a quadric section
of $V_{5}$, hence an anti-canonical divisor on $V_{5}$, which coincides
with the \emph{tangential scroll} of $C_{6}$, swept out by the tangent
lines to $C_{6}$ contained in $V_{5}$. It is singular along $C_{6}$,
and its normalization morphism coincides with the map $\nu:\mathbb{P}(M_{1})\times\mathbb{P}(M_{1})\rightarrow\mathbb{P}(M_{6})$,
$(f_{1},f_{2})\mapsto f_{1}^{5}f_{2}$.

\subsection{\label{subsec:Hilbert-Lines}Lines on $V_{5}$ }

The family of lines on $V_{5}$ is very well-studied \cite{Is80,FuNa89,Il94-2}.
We list below some of its properties which will be useful later on
for the study of cylinders on forms of $V_{5}$. 

First, by a \emph{line} on $V_{5}$, we mean an integral curve $\ell\subset V_{5}$
of anticanonical degree $-K_{V_{5}}\cdot\ell=2$. It thus corresponds
through the half-anticanonical embedding $\Phi_{|H|}:V_{5}\hookrightarrow\mathbb{P}_{\overline{k}}^{6}$
to a usual line in $\mathbb{P}_{\overline{k}}^{6}$ which is contained
in the image of $V_{5}$. A general line $\ell$ in $V_{5}$ has trivial
normal bundle, whereas there is a one-dimensional subfamily of lines
with normal bundle $\mathcal{N}_{\ell/V_{5}}\simeq\mathcal{O}_{\mathbb{P}_{\overline{k}}^{1}}(-1)\oplus\mathcal{O}_{\mathbb{P}_{\overline{k}}^{1}}(1)$,
which we call \emph{special lines}. 

The Hilbert scheme $\mathcal{H}(V_{5})$ of lines in $V_{5}$ is isomorphic
to $\mathbb{P}_{\overline{k}}^{2}$, and the evaluation map $\upsilon:\mathcal{U}\rightarrow V_{5}$
from the universal family $\mathcal{U}\rightarrow\mathcal{H}(V_{5})$
is a finite morphism of degree $3$. There are thus precisely three
lines counted with multiplicities passing through a given closed point
of $V_{5}$. 

The curve in $\mathcal{H}(V_{5})\simeq\mathbb{P}_{\overline{k}}^{2}$
that parametrizes special lines in $V_{5}$ is a smooth conic $C$.
The restriction of $\upsilon$ to $\mathcal{U}|_{C}$ is injective,
and in the description of $V_{5}$ as a quasi-homogeneous space of
$\mathrm{PGL}_{2}(\overline{k})$ given in $\S$ \ref{subsec:QuasiHom}
above, $\upsilon(\mathcal{U}|_{C})$ is the tangential scroll $\overline{S}_{2}$
to the rational normal sextic $C_{6}$, while $C_{6}$ itself coincides
with the image of the intersection of $\mathcal{U}|_{C}$ with the
ramification locus of $\upsilon$. In particular, special lines on
$V_{5}$ never intersect each others. Furthermore, there are three
lines with trivial normal bundle through any point in $V_{5}\setminus\overline{S}_{2}$,
a line with trivial normal bundle and a special line through any point
of $S_{2}$, and a unique special line through every point of $C_{6}$. 

\section{\label{sec:2}Forms of $V_{5}$ over non-closed fields}

A $k$\emph{-form of} $V_{5}$ is a smooth projective variety $Y$
defined over $k$ such that $Y_{\overline{k}}$ is isomorphic to $V_{5}$.
In this subsection, we establish basic properties of these forms and
their Hilbert schemes of lines. 

\subsection{\label{subsec:Lines-on-fom-Hilbert}Hilbert scheme of lines on a
$k$-form of $V_{5}$ }

Let $Y$ be a $k$-form of $V_{5}$, let $\mathcal{H}(Y)$ be the
Hilbert scheme of irreducible curves of $-K_{Y}$-degree equal to
$2$ on $Y$ and let $\upsilon:\mathcal{U}\rightarrow Y$ be the evaluation
map from the universal family $\mathcal{U}\rightarrow\mathcal{H}(Y)$.
By $\S$ \ref{subsec:Hilbert-Lines}, $\mathcal{H}(Y)_{\overline{k}}=\mathcal{H}(Y_{\overline{k}})$
is isomorphic to $\mathbb{P}_{\overline{k}}^{2}$, i.e. $\mathcal{H}(Y)$
is a $k$-form of $\mathbb{P}_{\overline{k}}^{2}$. Furthermore, since
the smooth conic parametrizing special lines on $Y_{\overline{k}}$
is invariant under the action of the Galois group $\mathrm{Gal}(\overline{k}/k)$,
it corresponds to a smooth curve $C\subset\mathcal{H}(Y)$ defined
over $k$, and such that $C_{\overline{k}}\simeq\mathbb{P}_{\overline{k}}^{1}$. 
\begin{lem}
\label{lem:Forms-General-Line}\label{lem:Existence-of-specialLines}
Let $Y$ be a $k$-form of $V_{5}$. 

a) The Hilbert scheme $\mathcal{H}(Y)$ of lines on $Y$ is isomorphic
to $\mathbb{P}_{k}^{2}$, in particular $Y$ always contains lines
defined over $k$ with trivial normal bundles. 

b) The following assertions are equivalent:

\begin{enumerate}[label=$($\roman*$)$]

\item $Y$ contains a special line defined over $k$, 

\item The conic $C$ has a $k$-rational point, 

\item The surface $\upsilon(\mathcal{U}|_{C})$ has a $k$-rational
point,

\item The image of the intersection of $\mathcal{U}|_{C}$ with the
ramification locus of $\upsilon$ has a $k$-rational point.

\end{enumerate}
\end{lem}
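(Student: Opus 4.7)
I would identify $\mathcal{H}(Y)$ with the Severi-Brauer surface $\mathrm{SB}(A)$ attached to a central simple $k$-algebra $A$ of degree $3$, so that $\mathcal{H}(Y) \simeq \mathbb{P}^2_k$ exactly when the Brauer class $[A] \in \mathrm{Br}(k)$ vanishes. Since $A$ has degree $3$, the class $[A]$ has order dividing $3$. The key input is the smooth conic $C \subset \mathcal{H}(Y)$: being defined over $k$ as an effective Cartier divisor, and having geometric class equal to $2$ in $\mathrm{Pic}(\mathcal{H}(Y)_{\overline{k}}) \simeq \mathbb{Z}$ (because a smooth conic in $\mathbb{P}^2_{\overline{k}}$ has degree $2$), it witnesses that the generator of $\mathrm{Pic}(\mathcal{H}(Y)_{\overline{k}})^{\mathrm{Gal}}$ becomes divisible by $2$ upon descent. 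By the standard Hochschild-Serre obstruction for line bundles on a Severi-Brauer variety, a class of degree $d$ descends to $\mathcal{H}(Y)$ if and only if $d [A] = 0$ in $\mathrm{Br}(k)$; since $\mathcal{O}_{\mathcal{H}(Y)}(C)$ realizes the descent of $\mathcal{O}(2)$, we conclude $2[A] = 0$. Combined with $3[A] = 0$, this forces $[A] = 0$, hence $\mathcal{H}(Y) \simeq \mathbb{P}^2_k$. The second assertion then follows immediately: $\mathbb{P}^2_k \setminus C$ is a nonempty Zariski-open subset, hence has plenty of $k$-rational points, each parametrizing a line defined over $k$ with trivial normal bundle.

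\textbf{Plan for (b).} I would establish the cycle (i) $\Leftrightarrow$ (ii) $\Rightarrow$ (iv) $\Rightarrow$ (iii) $\Rightarrow$ (ii). The equivalence (i) $\Leftrightarrow$ (ii) is just the defining property of $C$ as the Hilbert scheme parametrizing special lines. For (ii) $\Rightarrow$ (iv), let $p \in C(k)$ correspond to a special line $\ell \subset Y$ defined over $k$; by \S\ref{subsec:Hilbert-Lines}, $\ell$ meets the curve $C_{6}$ (the image under $\upsilon$ of $\mathcal{U}|_{C}$ intersected with the ramification locus of $\upsilon$) in a single $\overline{k}$-point, namely the point of tangency. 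Since both $\ell$ and $C_{6}$ are Galois-stable, this reduced length-one intersection subscheme is automatically a $k$-rational point of $C_{6}$. The implication (iv) $\Rightarrow$ (iii) is trivial since $C_{6} \subset \upsilon(\mathcal{U}|_{C}) = \overline{S}_{2}$. For (iii) $\Rightarrow$ (ii), a $k$-rational point $q$ of $\upsilon(\mathcal{U}|_{C}) = \overline{S}_{2}$ lies on a unique special line by \S\ref{subsec:Hilbert-Lines}; this line is therefore Galois-invariant and hence defined over $k$, producing the required $k$-rational point of $C$.

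\textbf{Main difficulty.} The delicate step is the descent argument in (a), which requires invoking the Brauer-class obstruction to descending line bundles on a Severi-Brauer variety and correctly identifying the class of $C$ as $2$ in $\mathrm{Pic}(\mathcal{H}(Y)_{\overline{k}})$. By contrast, the arguments in (b) are essentially bookkeeping: they translate the unique-special-line-through-each-point assertion of \S\ref{subsec:Hilbert-Lines} into statements about $k$-rationality, exploiting only the fact that a single geometric point that is Galois-stable must automatically be $k$-rational.
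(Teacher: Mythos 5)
Your proof of part (b) is essentially the paper's: the authors likewise reduce everything to the dictionary between special lines, points of the conic $C$, the tangential scroll $\upsilon(\mathcal{U}|_{C})$ and the curve $C_{6}$, together with the uniqueness statements of $\S$~\ref{subsec:Hilbert-Lines}; your explicit cycle of implications (i)$\Leftrightarrow$(ii)$\Rightarrow$(iv)$\Rightarrow$(iii)$\Rightarrow$(ii) just spells out what they dismiss as ``an immediate consequence'', and each step is correct (in particular the observation that a Galois-stable single geometric point of intersection is automatically $k$-rational in characteristic zero). Part (a) is where you genuinely diverge. The paper never mentions the Brauer group: it chooses a quadratic extension $k'/k$ over which $C$ acquires a point $p$, and notes that either $p$ is Galois-fixed, or the tangent lines $T_{p}C_{k'}$ and $T_{\overline{p}}C_{k'}$ at the two conjugate points meet in a unique, hence Galois-invariant, point of $\mathcal{H}(Y)\setminus C$; either way $\mathcal{H}(Y)$ has a $k$-rational point, and a form of $\mathbb{P}^{2}$ with a rational point is trivial. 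You instead read triviality off the Brauer class: $\mathcal{H}(Y)=\mathrm{SB}(A)$ with $\deg A=3$ gives $3[A]=0$, while the Galois-stable conic $C$, of class $\mathcal{O}(2)$ geometrically, shows via the Hochschild--Serre obstruction that $2[A]=0$, whence $[A]=0$. Both arguments are correct and rest on the same essential input (the canonically defined conic $C$ plus Ch\^atelet's theorem); yours is shorter and more conceptual but imports the $\mathrm{Pic}$--$\mathrm{Br}$ exact sequence, whereas the paper's is entirely elementary and in addition exhibits an explicit rational point, located either on $C$ or off $C$. The final step --- that $k$-points of $\mathbb{P}_{k}^{2}\setminus C$ yield lines defined over $k$ with trivial normal bundle --- is identical in both.
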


\begin{proof}
Since $\mathcal{H}(Y)$ is a $k$-form of $\mathbb{P}_{\overline{k}}^{2}$,
to prove the first assertion it is enough to show that $\mathcal{H}(Y)$
has a $k$-rational point. Since $C_{\overline{k}}\simeq\mathbb{P}_{\overline{k}}^{1}$,
there exists a quadratic extension $k\subset k'$ such that $C(k')$
is nonempty, so that in particular $\mathcal{H}(Y)_{k'}\simeq\mathbb{P}_{k'}^{2}$.
Let $p$ be a $k'$-rational point of $C_{k'}$. If $p$ is invariant
under the action of the Galois group $\mathrm{Gal}(k'/k)$, then it
corresponds to a $k$-rational point of $C$, hence of $\mathcal{H}(Y)$,
and we are done. Otherwise, its Galois conjugate $\overline{p}$ is
a $k'$-rational point of $C_{k'}$ distinct from $p$, and then the
tangent lines $T_{p}C_{k'}$ and $T_{\overline{p}}C_{k'}$ to $C_{k'}$
at $p$ and $\overline{p}$ respectively intersect each other at unique
point. The latter is thus $\mathrm{Gal}(k'/k)$-invariant, hence corresponds
to a $k$-rational point of $\mathcal{H}(Y)\setminus C$. The existence
of lines with trivial normal bundles defined over $k$ then follows
from the description of $\mathcal{H}(V_{5})$ given in $\S$ \ref{subsec:Hilbert-Lines}. 

The second assertion is an immediate consequence of the facts that
special lines in $Y_{\overline{k}}$ are in one-to-one correspondence
with closed points of the image $D$ of the intersection of $(\mathcal{U}|_{C})_{\overline{k}}$
with the ramification locus of $\upsilon$, and that $\upsilon(\mathcal{U}|_{C})_{\overline{k}}$
coincides with the surface swept out by the tangent lines to $D$. 
\end{proof}
\begin{cor}
\label{cor:FormRational-Divisible}A $k$-form $Y$ of $V_{5}$ is
$k$-rational and the natural map $\mathrm{Pic}(Y)\rightarrow\mathrm{Pic}(Y_{\overline{k}})$
is an isomorphism. 
\end{cor}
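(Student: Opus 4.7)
The plan is to descend the Iskovskikh Sarkisov link of $\S$\ref{subsec:Iskov-link} to $k$. By Lemma~\ref{lem:Forms-General-Line}(a) I pick a line $\ell\subset Y$ defined over $k$ with trivial normal bundle, which is the input required by the link. Since $H\cdot\ell=1$, the line bundle $\mathcal{O}_{Y}(H)|_{\ell}$ has odd degree on the smooth genus-$0$ curve $\ell$, and the vanishing locus of a nonzero section (which exists by flat base change from the $\overline{k}$-situation where $h^{0}=2$) is a reduced $k$-rational point of $\ell$, forcing $\ell\simeq\mathbb{P}^{1}_{k}$. In particular $Y(k)\neq\emptyset$. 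Blowing up $\ell$ yields $\sigma\colon\tilde Y\to Y$ over $k$ with exceptional divisor $E=\mathbb{P}(\mathcal{N}^{\vee}_{\ell/Y})$; since $\mathcal{N}_{\ell/Y}$ has degree $0$ and $\ell\simeq\mathbb{P}^{1}_{k}$, Grothendieck's splitting theorem forces $\mathcal{N}_{\ell/Y}\simeq\mathcal{O}_{\ell}^{\oplus 2}$ over $k$, hence $E\simeq\mathbb{P}^{1}_{k}\times_{k}\mathbb{P}^{1}_{k}$, in particular $k$-rational.

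Next, the linear system $|\sigma^{*}H-E|$ on $\tilde Y$ is Galois-invariant, so defined over $k$, and its base change to $\overline{k}$ coincides with the linear system realizing the second contraction of $\S$\ref{subsec:Iskov-link}. Descent therefore produces a $k$-morphism $\tilde Y\to Q$ onto a $k$-form $Q\subset\mathbb{P}^{4}_{k}$ of a smooth quadric $3$-fold, whose restriction to $E$ is an isomorphism $E\xrightarrow{\sim}Q_{0}$ onto a hyperplane section $Q_{0}$ of $Q$. Since $Q_{0}$ inherits $k$-points from $E$, $Q$ has a $k$-rational point, and stereographic projection from such a point makes the smooth quadric $Q$ $k$-rational. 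Composing with the birational equivalence $Y\dashrightarrow Q$ yields the $k$-rationality of $Y$.

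For the Picard-group statement, I appeal to the low-degree five-term exact sequence coming from the Hochschild--Serre spectral sequence for $\mathbb{G}_{m}$ on the \'etale site of $Y$:
\[
0\longrightarrow\mathrm{Pic}(Y)\longrightarrow\mathrm{Pic}(Y_{\overline{k}})^{\mathrm{Gal}(\overline{k}/k)}\longrightarrow\mathrm{Br}(k)\longrightarrow\mathrm{Br}(Y).
\]
The rightmost arrow is split-injective once $Y(k)\neq\emptyset$ (any $k$-point induces a retraction), so the map $\mathrm{Pic}(Y_{\overline{k}})^{\mathrm{Gal}}\to\mathrm{Br}(k)$ vanishes, giving $\mathrm{Pic}(Y)\simeq\mathrm{Pic}(Y_{\overline{k}})^{\mathrm{Gal}}$. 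Since $\mathrm{Pic}(Y_{\overline{k}})=\mathbb{Z}\cdot H$ is generated by the unique ample class $H$ characterized by $-K_{Y_{\overline{k}}}=2H$, the Galois action on it is trivial, and the asserted isomorphism follows.

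The main technical step to flesh out is the descent of the Sarkisov link itself: one must verify that the $\overline{k}$-construction of $\S$\ref{subsec:Iskov-link}, namely the surface $Z$ swept out by lines meeting $\ell$, its proper transform $Z'$, the target quadric $Q$ and the twisted cubic $C\subset Q$, is Galois-equivariant, so that all these objects descend to $k$-defined ones. This should be essentially automatic because $\ell$ and $H$ are $k$-defined and the constructions depend only on this data, but the careful check that the image of $\tilde Y$ under $|\sigma^{*}H-E|$ is a closed $k$-subvariety of $\mathbb{P}^{4}_{k}$ (equivalently, that the defining quadratic form of $Q_{\overline{k}}$ has coefficients in $k$) is the core of the verification.
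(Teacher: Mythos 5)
Your strategy for the rationality assertion follows the paper: descend the Iskovskikh link of \S\ref{subsec:Iskov-link} to $k$ and conclude from a $k$-point on the target quadric. For the Picard assertion you take a genuinely different route (the Hochschild--Serre sequence $0\to\mathrm{Pic}(Y)\to\mathrm{Pic}(Y_{\overline{k}})^{\mathrm{Gal}}\to\mathrm{Br}(k)\to\mathrm{Br}(Y)$, with the last arrow split by a $k$-point), whereas the paper exhibits an explicit $k$-defined divisor representing the generator: it proves $-K_{Y}\sim 2Z$ for the surface $Z$ swept out by the lines meeting $\ell$, using $-K_{Q}\sim3Q_{0}$, ${q'}^{*}Q_{0}=Q_{0}'+Z'$ and the ramification formulas for $q$ and $q'$. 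Your route would be perfectly adequate \emph{granted} $Y(k)\neq\emptyset$.

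The gap is that your only source of a $k$-rational point is circular. To show $\ell\simeq\mathbb{P}_{k}^{1}$ you restrict ``the line bundle $\mathcal{O}_{Y}(H)$'' to $\ell$ and use that it has odd degree; but a priori only $\omega_{Y}^{-1}=\mathcal{O}_{Y}(2H)$ is known to be defined over $k$, and whether the Galois-invariant class $H$ is itself represented by a line bundle on $Y$ --- equivalently, whether $\mathrm{Pic}(Y)\to\mathrm{Pic}(Y_{\overline{k}})=\mathbb{Z}H$ is onto --- is precisely the second assertion of the corollary, which in your write-up is established only afterwards, from $Y(k)\neq\emptyset$, which in turn comes from $\ell\simeq\mathbb{P}_{k}^{1}$. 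The line furnished by Lemma \ref{lem:Forms-General-Line} a) is a priori only a $k$-form of $\mathbb{P}^{1}$, i.e. a conic, and every line bundle you can be sure exists over $k$ at that stage ($\omega_{Y}^{-1}$, $\omega_{\ell}^{-1}$, $\det\mathcal{N}_{\ell/Y}$) restricts to even degree on $\ell$, so no rational point falls out. The problem then propagates: without $\ell\simeq\mathbb{P}_{k}^{1}$ you cannot conclude $E\simeq\mathbb{P}_{k}^{1}\times\mathbb{P}_{k}^{1}$, hence not $Q(k)\neq\emptyset$ (a smooth quadric threefold over $k$ need not be isotropic --- consider $\sum_{i=0}^{4}x_{i}^{2}$ over $\mathbb{R}$), hence neither the $k$-rationality nor, via Lang--Nishimura, a $k$-point. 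The repair is the paper's computation: $Z$ is defined over $k$ because its construction from the $k$-defined $\ell$ is Galois-equivariant, and $-K_{Y}\sim2Z$ shows $\mathcal{O}_{Y}(Z)$ is a $k$-defined representative of $H$; this yields the Picard assertion with no rational point needed, and only then do $\deg\mathcal{O}_{Y}(Z)|_{\ell}=1$, $\ell\simeq\mathbb{P}_{k}^{1}$, and the $k$-point on $Q$ follow. By contrast, the step you single out as the main one to flesh out --- descending the equations of $Q$ to $k$ --- is comparatively harmless.
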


\begin{proof}
By Lemma \ref{lem:Forms-General-Line} a), $Y$ contains a line $\ell\simeq\mathbb{P}_{k}^{1}$
with trivial normal bundle. The surface $Z$ in $Y_{\overline{k}}$
swept out by lines intersecting $\ell_{\overline{k}}$ is defined
over $k$. As in $\S$ \ref{subsec:Iskov-link}, the composition of
the blow-up $q:\tilde{Y}\rightarrow Y$ of $\ell$ with exceptional
divisor $Q_{0}'\simeq\mathbb{P}_{k}^{1}\times\mathbb{P}_{k}^{1}$
followed by the contraction $q':\tilde{Y}\rightarrow Q$ of the proper
transform of $Z'$ of $Z$ yields a birational map $Y\dashrightarrow Q$
defined over $k$ onto a smooth quadric $Q\subset\mathbb{P}_{k}^{4}$,
which maps $Q_{0}'$ onto a hyperplane section $Q_{0}$ of $Q$. Since
$Q$ contains $k$-rational points, it is $k$-rational. The natural
map $\mathrm{Pic}(Y)\rightarrow\mathrm{Pic}(Y_{\overline{k}})$ is
an isomorphism if and only $-K_{Y}$ is divisible in $\mathrm{Pic}(Y)$.
But since $-K_{Q}\sim3Q_{0}$ and ${q'}^{*}Q_{0}=Q_{0}'+Z'$, we deduce
from the ramification formula for $q$ and $q'$ that 
\[
-K_{Y}\sim q_{*}(-K_{\tilde{Y}})\sim q_{*}(-{q'}^{*}K_{Q}-Z')\sim q_{*}(3Q_{0}'+2Z')\sim2Z.
\]
\end{proof}

\subsection{Varieties of trisecant lines to Veronese surfaces in $\mathbb{P}^{4}$}

In this subsection, we review a third classical construction of $V_{5}$
as the variety of trisecant line to the Veronese surface in $\mathbb{P}_{\overline{k}}^{4}$
which, when performed over $k$ gives rise, depending on the choices
made, to nontrivial $k$-forms of $V_{5}$ . 

Let $V$ be a $k$-vector space of dimension $3$, let $f\in\mathrm{Sym}^{2}V^{*}$
be a homogeneous form defining a smooth conic $Q\subset\mathbb{P}(V)$,
and let $W=\mathrm{Sym}^{2}(V^{*}/\langle f\rangle)$. Recall that
a closed subscheme $Z\subset\mathbb{P}(V^{*})$ defined over $k$
is called \emph{apolar} to $Q$ if the class of $[f]\in\mathbb{P}(\mathrm{Sym}^{2}V^{*})$
lies in the linear span of the image of $Z$ by the second Veronese
embedding $v_{2}:\mathbb{P}(V^{*})\hookrightarrow\mathbb{P}(\mathrm{Sym}^{2}V^{*})$.
When $Z=\{[\ell_{1}],[\ell_{2}],[\ell_{3}]\}\subset\mathbb{P}(V^{*})$
consists of three distinct $k$-rational points, this says equivalently
that there are scalars $\lambda_{i}\in k$, $i=1,2,3$, such that
$f=\lambda_{1}\ell_{1}^{2}+\lambda_{2}\ell_{2}^{2}+\lambda_{3}\ell_{3}^{2}$.
We denote by $\mathrm{VSP}(f)$ the variety of closed subschemes of
length $3$ of $\mathbb{P}(V^{*})$ which are apolar to $Q$.

Let $\pi_{[f]}:\mathbb{P}(\mathrm{Sym}^{2}V^{*})\dashrightarrow\mathbb{P}(W)$
be the projection from $[f]$. Since $Q$ is smooth, the composition
$\pi_{[f]}\circ v_{2}$ is a closed embedding of $\mathbb{P}(V^{*})$,
whose image is a Veronese surface $X$ in $\mathbb{P}(W)$. Since
$X$ is defined over $k$, the closed subscheme $Y$ of the Grassmannian
$G(1,\mathbb{P}(W_{\overline{k}}))$ of lines in $\mathbb{P}(W_{\overline{k}})$
consisting of trisecant lines to $X_{\overline{k}}$, that is, lines
$\ell$ in $\mathbb{P}(W_{\overline{k}})$ such that $\ell\cdot X_{\overline{k}}$
is a closed subscheme of length $3$ of $\ell$,  is defined over
$k$, and we obtain an identification between $\mathrm{VSP}(f)$ and
$Y$. 
\begin{prop}
$($see \cite[\S 2.3]{Il94}$)$ With the notation above, $Y=\mathrm{VSP}(f)$
is a $k$-form of $V_{5}$. Furthermore, the stratification of $Y_{\overline{k}}$
by the types of the corresponding closed subschemes of length $3$
of $\mathbb{P}(V^{*})$ is related to that of $V_{5}$ as a quasi-homogenous
space of $\mathrm{PGL}_{2}(\overline{k})$ given in $\S$ \ref{subsec:QuasiHom}
as follows:

\begin{enumerate}[label=$($\roman*$)$]

\item Points of the open orbit $O$ correspond to reduced subschemes\textup{
$\{[\ell_{1}],[\ell_{2}],[\ell_{3}]\}$ }apolar to\textup{ $Q_{\overline{k}}$
}such that none of the\textup{ $[\ell_{i}]$ }belongs to the dual
conic\textup{ $Q_{\overline{k}}^{*}\subset\mathbb{P}(V_{\overline{k}}^{*})$
}of \textup{$Q_{\overline{k}}$.}

\item Points in the $2$-dimensional orbit $S_{2}$ correspond to
non-reduced subschemes $\{2[\ell_{1}],[\ell_{2}]\}$ where $[\ell_{1}]$
belongs to $Q_{\overline{k}}^{*}$ and $[\ell_{2}]$ is a point of
the tangent line $T_{[\ell_{1}]}Q_{\overline{k}}^{*}$ to $Q_{\overline{k}}^{*}$
at $[\ell_{1}]$ distinct from $[\ell_{1}]$. 

\item Points in the $1$-dimensional orbit $C_{6}$ correspond to
non-reduced subschemes $\{3[\ell]\}$ where $[\ell]$ is a point of
$Q_{\overline{k}}^{*}$. 

\end{enumerate}
\end{prop}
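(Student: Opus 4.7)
The plan is to work over $\overline{k}$, reduce the statement to the classical identification of the variety of trisecant lines to a Veronese surface in $\mathbb{P}^{4}$ with the quintic del Pezzo threefold, and then match the orbit decomposition of $\mathrm{PGL}_{2}(\overline{k})$ on $V_{5}$ recalled in $\S$\ref{subsec:QuasiHom} with the stratification of $\mathrm{VSP}(f)_{\overline{k}}$ by the type of apolar length-$3$ subscheme.

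I would start from the observation that apolarity translates cleanly under projection from $[f]$: a length-$3$ subscheme $Z \subset \mathbb{P}(V_{\overline{k}}^{*})$ is apolar to $Q_{\overline{k}}$ precisely when $[f]$ lies in the linear span of $v_{2}(Z)$ in $\mathbb{P}(\mathrm{Sym}^{2}V_{\overline{k}}^{*})$, equivalently when the image $(\pi_{[f]} \circ v_{2})(Z) \subset X_{\overline{k}}$ spans a line in $\mathbb{P}(W_{\overline{k}})$, i.e.\ is cut on $X_{\overline{k}}$ by a trisecant line. Since this translation is canonical and compatible with tangent vectors (so that it handles non-reduced subschemes), it yields a $k$-isomorphism $\mathrm{VSP}(f) \stackrel{\sim}{\to} Y$. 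The fact that $Y_{\overline{k}} \cong V_{5}$ then follows from the classical theorem, recalled in \cite[\S 2.3]{Il94}, that the scheme of trisecant lines to a Veronese surface in $\mathbb{P}^{4}$ is the quintic del Pezzo threefold of index two embedded by its half-anticanonical linear system.

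For the stratification, I would use that $\mathrm{Aut}(V_{5}) \cong \mathrm{PGL}_{2}(\overline{k})$ is realized under the identification above as the stabilizer $\mathrm{PGL}(V_{\overline{k}})_{Q_{\overline{k}}}$ of the smooth conic $Q_{\overline{k}}$, which acts on $\mathbb{P}(V_{\overline{k}}^{*})$ preserving the dual conic $Q_{\overline{k}}^{*}$ and the apolarity relation. Classifying length-$3$ apolar subschemes up to this action then gives precisely the three strata claimed: reduced triples with no point on $Q_{\overline{k}}^{*}$ form an open dense $3$-dimensional orbit; non-reduced subschemes $\{2[\ell_{1}],[\ell_{2}]\}$ with $[\ell_{1}] \in Q_{\overline{k}}^{*}$ and $[\ell_{2}] \in T_{[\ell_{1}]}Q_{\overline{k}}^{*} \setminus \{[\ell_{1}]\}$ form a $2$-dimensional orbit; triple-point subschemes $\{3[\ell]\}$ with $[\ell] \in Q_{\overline{k}}^{*}$ form a $1$-dimensional orbit isomorphic to $Q_{\overline{k}}^{*} \cong \mathbb{P}^{1}_{\overline{k}}$. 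Dimension counts and stabilizer computations (the binary octahedral, torus, and Borel respectively) then match those of $\S$\ref{subsec:QuasiHom}.

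The main obstacle is the infinitesimal calculation linking non-reducedness of the apolar subscheme with the tangency/osculation behaviour of trisecant lines to $X_{\overline{k}}$. Concretely, one needs to verify that the tangent space to $v_{2}(\mathbb{P}(V^{*}))$ at $[\ell^{2}]$ meets the line spanned by $[f]$ and $[\ell^{2}]$ if and only if $[\ell] \in Q_{\overline{k}}^{*}$, i.e.\ the projected curve $(\pi_{[f]}\circ v_{2})(Q_{\overline{k}}^{*})$ is exactly the sextic $C_{6}$ of $\S$\ref{subsec:QuasiHom} and its tangential scroll agrees with $\overline{S}_{2}$. Once this identification is carried out, the orbit structure and the description of the non-reduced strata follow directly from the geometry of the tangential scroll of $C_{6}$ on $V_{5}$.
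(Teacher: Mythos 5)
First, note that the paper does not actually prove this proposition: it is quoted from Iliev \cite[\S 2.3]{Il94}, and the identification of $\mathrm{VSP}(f)$ with the variety $Y$ of trisecant lines to $X$ is already set up in the paragraph preceding the statement. So there is no in-paper argument to compare against, and your proposal has to stand on its own. As an outline it is sound and follows the classical route: transport apolarity through $\pi_{[f]}\circ v_{2}$, invoke the classical identification of the trisecant variety of a Veronese surface in $\mathbb{P}^{4}$ with $V_{5}$, and then match orbits of the stabilizer $\mathrm{PGL}(V_{\overline{k}})_{Q_{\overline{k}}}\simeq\mathrm{PGL}_{2}(\overline{k})$ acting on $\mathbb{P}(V_{\overline{k}}^{*})$ with the strata of apolar subschemes. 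Realizing $\mathrm{Aut}(V_{5})$ as the stabilizer of the conic is exactly the right mechanism for comparing the two stratifications.

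However, as written the proposal is a plan rather than a proof: the decisive computation is flagged as ``the main obstacle'' and then not carried out. Everything in items (ii) and (iii) rests on showing that apolarity \emph{forces} the stated configurations --- that an apolar length-$3$ subscheme with a double point must have that double point on $Q_{\overline{k}}^{*}$ with the residual point on $T_{[\ell_{1}]}Q_{\overline{k}}^{*}$, and that an apolar curvilinear triple point must be supported on $Q_{\overline{k}}^{*}$ --- equivalently, that $(\pi_{[f]}\circ v_{2})(Q_{\overline{k}}^{*})$ is the closed orbit $C_{6}$ and its tangential scroll is $\overline{S}_{2}$. You correctly identify this as the crux but give no argument for it. A second, smaller omission: item (i) implicitly asserts that a \emph{reduced} apolar triple can never have a point on $Q_{\overline{k}}^{*}$; this needs its own (easy) verification, e.g.\ that $f-\lambda\ell^{2}$ has rank $3$ for every $[\ell]\in Q_{\overline{k}}^{*}$ and every $\lambda$, and it does not follow formally from equivariance and dimension counts alone, since a priori the (open, invariant) reduced locus could be $O\cup S_{2}$ rather than $O$. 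Finally, the ``equivalently'' in your first paragraph silently uses that $v_{2}(Z)$ spans a plane for every length-$3$ subscheme $Z$ (the Veronese surface in $\mathbb{P}^{5}$ has no trisecant lines); this is true but should be said. None of these is a wrong turn, but until the infinitesimal/apolarity computation is actually performed, the stratification claims (ii) and (iii) remain unproved.
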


The Hilbert scheme $\mathcal{H}(Y)$ of lines on a $k$-form $Y=\mathrm{VSP}(f)\subset G(1,\mathbb{P}(W))$
can be explicitly described as follows. Viewing $G(1,\mathbb{P}(W))$
as a closed subscheme of $\mathbb{P}(\Lambda^{2}W)$ via the Plücker
embedding, $\mathcal{H}(Y)$ is a closed subscheme of the Hilbert
scheme $\mathcal{H}(G(1,\mathbb{P}(W)))$ of lines on $G(1,\mathbb{P}(W))$.
The latter is isomorphic to the flag variety $\mathcal{F}(0,2,\mathbb{P}(W))\subset\mathbb{P}(W)\times G(2,\mathbb{P}(W))$
of pairs $(x,P)$ consisting of a point $x\in\mathbb{P}(W)$ and a
$2$-dimensional linear space $P\subset\mathbb{P}(W)$ containing
it. 
\begin{prop}
\label{prop:VSP-lines}$($see e.g. \cite[\S 1.2]{Il94-2}$)$ Let
$f\in\mathrm{Sym}^{2}V^{*}$ be a homogeneous form defining a smooth
conic $Q\subset\mathbb{P}(V)$, let $W=\mathrm{Sym}^{2}(V^{*}/\langle f\rangle)$,
and let $\mathrm{VSP}(f)\subset G(1,\mathbb{P}(W))$ be the variety
of trisecant lines to the Veronese surface $X=\pi_{[f]}\circ v_{2}(\mathbb{P}(V^{*}))$. 

\begin{enumerate}[label=$($\roman*$)$]

\item The projection $\mathrm{pr}_{1}:\mathcal{F}(0,2,\mathbb{P}(W))\rightarrow\mathbb{P}(W)$
restricts to a closed embedding $\mathcal{H}(\mathrm{VSP}(f))\hookrightarrow\mathbb{P}(W)$
whose image is equal to $X\simeq\mathbb{P}(V^{*})$.

\item The image of the smooth conic $C\subset\mathcal{H}(\mathrm{VSP}(f))$
parametrizing special lines in $\mathrm{VSP}(f)$ coincides with the
conic $Q^{*}\subset\mathbb{P}(V^{*})$ dual to $Q\subset\mathbb{P}(V)$.

\item For every line $\ell$ in $\mathrm{VSP}(f)_{\overline{k}}$
defined by a point $x\in\mathbb{P}(V_{\overline{k}}^{*})$, the set
of lines in $\mathrm{VSP}(f)_{\overline{k}}$ which intersect $\ell$
is parametrized by the conjugate line of $x$ with respect to $Q_{\overline{k}}^{*}$. 

\end{enumerate}
\end{prop}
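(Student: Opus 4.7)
My strategy is to translate the incidence geometry of lines in $\mathrm{VSP}(f)\subset G(1,\mathbb{P}(W))$ into linear algebra on $\mathrm{Sym}^{2}V^{*}$ via the apolarity description, using the identification of the Hilbert scheme of lines on $G(1,\mathbb{P}(W))$ with the flag variety $\mathcal{F}(0,2,\mathbb{P}(W))$, in which a pair $(x,P)$ parametrizes the pencil of lines through $x$ in $P$.

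For (i), a line $L\subset\mathrm{VSP}(f)$ corresponds to a $1$-parameter family of apolar triples $\{[\ell_{1}(t)],[\ell_{2}(t)],[\ell_{3}(t)]\}$ whose associated trisecants form a pencil with common point $x$. Since each trisecant meets $X$ exactly in the three points $\pi_{[f]}(v_{2}([\ell_{i}(t)]))$, the requirement that every trisecant pass through $x$ forces one of the $[\ell_{i}(t)]$, say $[\ell_{1}(t)]\equiv[\ell]$, to remain constant, and then $x=\pi_{[f]}(v_{2}([\ell]))\in X$. This defines a morphism $\mathcal{H}(\mathrm{VSP}(f))\to X\subset\mathbb{P}(W)$ whose inverse sends $[\ell]\in\mathbb{P}(V^{*})$ to the $1$-parameter family of apolar triples containing $[\ell]$. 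Bijectivity on $\overline{k}$-points, combined with the fact that both source and target are smooth irreducible projective surfaces, upgrades this to an isomorphism, so $\mathrm{pr}_{1}|_{\mathcal{H}(\mathrm{VSP}(f))}$ is a closed embedding with image $X$.

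For (ii), I would combine (i) with the Mukai--Umemura picture recalled in \S\ref{subsec:Hilbert-Lines}: the conic $C\subset\mathcal{H}(\mathrm{VSP}(f))$ of special lines is characterized as the locus of lines whose universal fiber meets the ramification divisor of $\upsilon$ in a scheme mapping onto the closed $\mathrm{PGL}_{2}$-orbit $C_{6}$. The preceding proposition identifies $C_{6}$, in the VSP stratification, with the locus of triple points $\{3[\ell]\}$ where $[\ell]\in Q^{*}$; under the bijection of (i), a pencil of apolar triples containing $[\ell]$ degenerates to such a triple point precisely when $[\ell]\in Q^{*}$. Hence $C$ maps bijectively onto $Q^{*}\subset\mathbb{P}(V^{*})$, and as a bijective morphism of smooth conics it is an isomorphism.

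The technical core is (iii). Two lines $L_{[\ell]}$ and $L_{[\ell']}$ in $\mathrm{VSP}(f)_{\overline{k}}$ meet if and only if they share a common apolar triple, equivalently there exist a linear form $\ell''$ and scalars $\lambda,\mu,\nu$ with $f=\lambda\ell^{2}+\mu(\ell')^{2}+\nu(\ell'')^{2}$; rearranging, this amounts to the pencil of quadratic forms $f-\lambda\ell^{2}-\mu(\ell')^{2}\in\mathrm{Sym}^{2}V^{*}$ meeting the rank-one locus $v_{2}(\mathbb{P}(V^{*}))$ for some $(\lambda,\mu)$. Using the matrix determinant lemma one expands $\det(M_{f}-\lambda\ell\ell^{\top}-\mu\ell'(\ell')^{\top})$ and the associated $2\times 2$ minors as polynomials in $(\lambda,\mu)$; eliminating the two parameters, the resulting condition on $([\ell],[\ell'])$ factors through the adjugate $\mathrm{adj}(M_{f})$, which is (up to scalar) the matrix representing the dual conic $Q^{*}\subset\mathbb{P}(V^{*})$. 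The condition collapses to the single bilinear equation $Q^{*}(\ell,\ell')=0$, that is, $[\ell']$ lies on the polar (conjugate) line of $[\ell]$ with respect to $Q^{*}$. Identifying the rank-one degeneration locus of the pencil $f-\lambda\ell^{2}-\mu(\ell')^{2}$ with the polar pairing of $Q^{*}$ is the main computational obstacle; once accomplished, (iii) follows formally from (i).
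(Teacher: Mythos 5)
The paper does not actually prove this proposition; it is quoted from Iliev \cite[\S 1.2]{Il94-2}, so your argument can only be judged on its own terms. Your overall strategy --- identify a line of $\mathrm{VSP}(f)$ with the one-parameter family of apolar length-$3$ subschemes through a fixed point of $\mathbb{P}(V^{*})$, then translate incidence of two such lines into the polarity pairing of $Q^{*}$ --- is the right one, and the statements you assert are true. But the two steps carrying the real content are missing. In (i), the claim that for a pencil of trisecants with vertex $x$ ``one of the $[\ell_{i}(t)]$ must remain constant, hence $x\in X$'' is precisely the nontrivial point and you only assert it: a priori the vertex could lie off $X$, and excluding this requires controlling how the plane of the pencil meets the projected Veronese (e.g.\ that a plane meeting $X$ in a finite scheme of length $\geq 4$ would produce four points imposing dependent conditions on conics, hence lying on a conic $v_{2}(L)$ whose span is that plane, contradicting finiteness, plus a separate discussion of planes cutting $X$ in a conic). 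Symmetrically, your inverse map needs the verification that the apolar subschemes containing a fixed $[\ell]$ form a pencil of the Grassmannian (common point \emph{and} common plane); this follows from $\det(M_{f}-\lambda\,\ell\ell^{\top})=\det M_{f}-\lambda\,\ell^{\top}\mathrm{adj}(M_{f})\,\ell$ being linear in $\lambda$ and the description of the decompositions of the residual rank-two form, none of which appears in your text.

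For (iii) you state yourself that identifying the rank-one degeneration condition with the polar pairing of $Q^{*}$ is ``the main computational obstacle'' and you do not carry it out, so (iii) is not proved; the computation is in fact short (the radical of $f-\lambda_{0}\ell^{2}$ is spanned by $\mathrm{adj}(M_{f})\ell$, so the remaining two vertices range over the line of forms vanishing at $[\mathrm{adj}(M_{f})\ell]$, which is the conjugate line of $[\ell]$). A more serious defect is that your reduction ``two lines meet iff $f=\lambda\ell^{2}+\mu(\ell')^{2}+\nu(\ell'')^{2}$ for some $\ell''$'' is valid only when the common point of the two lines is a \emph{reduced} apolar subscheme. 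By the stratification proposition no reduced apolar triple has a vertex on $Q^{*}$ (indeed $\det(M_{f}-\lambda\ell\ell^{\top})=\det M_{f}\neq0$ for $[\ell]\in Q^{*}$), so for a special line, i.e.\ $x\in Q_{\overline{k}}^{*}$, every point of the corresponding pencil is non-reduced and your criterion returns the empty set, whereas the proposition asserts the answer is the tangent line $T_{x}Q_{\overline{k}}^{*}$; the same failure occurs at tangency points even for $x\notin Q^{*}$. The same blind spot enters (ii), where you invoke ``the pencil of apolar triples containing $[\ell]$'' for $[\ell]\in Q^{*}$: there are none, and the pencil through the point $\{3[\ell]\}$ of $C_{6}$ consists of the subschemes $\{2[\ell],[m]\}$ with $[m]\in T_{[\ell]}Q^{*}$. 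These boundary cases must be handled scheme-theoretically, or by a closure-plus-irreducibility argument, before (ii) and (iii) can be considered established.
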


As a consequence, we obtain the following characterization of which
$k$-forms $Y=\mathrm{VSP}(f)$ of $V_{5}$ contain special lines
defined over $k$. 
\begin{cor}
The $k$-form $Y=\mathrm{VSP}(f)$ of $V_{5}$ contains a special
line defined over $k$ if and only if the conic $Q=V(f)\subset\mathbb{P}(V)$
has a $k$-rational point.
\end{cor}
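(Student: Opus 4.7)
The plan is to combine the characterization of special lines given by Lemma \ref{lem:Existence-of-specialLines} with the explicit description of the conic of special lines provided by Proposition \ref{prop:VSP-lines}, and then to observe that the resulting conic is $k$-isomorphic to $Q$ via the polarity.

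First, by Lemma \ref{lem:Existence-of-specialLines}~(b), the $k$-form $Y=\mathrm{VSP}(f)$ contains a special line defined over $k$ if and only if the smooth conic $C\subset\mathcal{H}(Y)$ parametrizing special lines has a $k$-rational point. By Proposition \ref{prop:VSP-lines}~(i), the projection $\mathrm{pr}_{1}$ restricts to a closed embedding of $\mathcal{H}(Y)$ into $\mathbb{P}(W)$ identifying $\mathcal{H}(Y)$ with the Veronese surface $X\simeq\mathbb{P}(V^{*})$; under this identification, Proposition \ref{prop:VSP-lines}~(ii) says that $C$ corresponds precisely to the dual conic $Q^{*}\subset\mathbb{P}(V^{*})$. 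Since everything is defined over $k$, we obtain a $k$-isomorphism $C\simeq Q^{*}$, so the problem reduces to showing that $Q(k)\neq\emptyset$ if and only if $Q^{*}(k)\neq\emptyset$.

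Next, I would exhibit an explicit $k$-isomorphism between $Q$ and $Q^{*}$ via the polarity defined by $f$. Since $Q$ is smooth, the symmetric bilinear form associated to $f\in\mathrm{Sym}^{2}V^{*}$ is non-degenerate and induces a $k$-linear isomorphism $V\xrightarrow{\sim}V^{*}$, whose projectivization restricts to a $k$-isomorphism $Q\xrightarrow{\sim}Q^{*}$ sending a point $[v]\in Q$ to the class of the tangent line $T_{[v]}Q$. Consequently $Q$ has a $k$-rational point if and only if $Q^{*}$ does, which combined with the previous step yields the claim.

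The only real content is the identification $C\simeq Q^{*}$, but that is supplied verbatim by Proposition \ref{prop:VSP-lines}~(ii); the remaining step, namely that $Q$ and $Q^{*}$ are $k$-isomorphic smooth conics via the polarity of $f$, is standard and entirely formal once one observes that $f$ is defined over $k$ and non-degenerate. There is therefore no substantial obstacle to overcome beyond assembling these two facts in the correct order.
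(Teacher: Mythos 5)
Your argument is correct and follows exactly the paper's own proof: combine Lemma \ref{lem:Existence-of-specialLines}~b) with Proposition \ref{prop:VSP-lines}~(ii) to reduce to whether $Q^{*}$ has a $k$-rational point, then pass from $Q^{*}$ to $Q$. You merely make explicit the final step (the polarity $V\xrightarrow{\sim}V^{*}$ giving a $k$-isomorphism $Q\simeq Q^{*}$), which the paper leaves implicit.
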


\begin{proof}
By combining Lemma \ref{lem:Existence-of-specialLines} b) and Proposition
\ref{prop:VSP-lines} $($ii$)$, we deduce that $Y$ contains a special
line defined over $k$ if and only if the conic $Q^{*}\subset\mathbb{P}(V^{*})$
dual to $Q$ has a $k$-rational point, hence if and only if $Q$
has a $k$-rational point. 
\end{proof}
\begin{example}
\label{exa:form-without-special-line} Let $k=\mathbb{C}(s,t)$ with
$s,t$ algebraically independent over $\mathbb{C}$, and let $f=x^{2}+sy^{2}+tz^{2}\in k[x,y,z]$.
Since the conic $Q=V(f)\subset\mathbb{P}_{k}^{2}=\mathrm{Proj}_{k}(k[x,y,z])$
has no $k$-rational point, the variety $Y=\mathrm{VSP}(f)$ is a
$k$-form of $V_{5}$ which does not contain any special line defined
over $k$. 
\end{example}

\section{\label{sec:3}Double projection from a rational point }

Let $V_{5}\hookrightarrow\mathbb{P}_{\overline{k}}^{6}$ be embedded
by its half-anticanonical complete linear system. For every closed
point $y\in V_{5}$, the linear system $|\mathcal{O}_{V_{5}}(1)\varotimes\mathfrak{m}_{y}^{2}|$
of hyperplane sections of $V_{5}$ which are singular at $y$ defines
a rational map $\psi_{y}:V_{5}\dashrightarrow\mathbb{P}_{\overline{k}}^{2}$
called the \emph{double projection from} $y$, whose description is
classical knowledge in the birational geometry of threefolds \cite{FuNa89-2,KoPr92}.
In this section, inspired by \cite{Ta89}, we give an explicit ``reverse
construction'' of these maps, valid over any field $k$ of characteristic
zero, formulated in terms of Sarkisov links performed from certain
locally trivial $\mathbb{P}^{1}$-bundles over $\mathbb{P}^{2}$ associated
to standard birational quadratic involutions of $\mathbb{P}^{2}$. 

\subsection{\label{subsec:Quadric-involutions}Recollection on standard quadratic
involutions of $\mathbb{P}^{2}$ }

Let $\sigma:\mathbb{P}_{k}^{2}\dashrightarrow\mathbb{P}_{k}^{2}$
be a birational quadratic involution of $\mathbb{P}_{k}^{2}$ and
let $\Gamma_{\sigma}\subset\mathbb{P}_{k}^{2}\times\mathbb{P}_{k}^{2}$
be its graph. Via the two projections $\tau=\mathrm{pr}_{1}:\Gamma_{\sigma}\rightarrow\mathbb{P}_{k}^{2}$
and $\tau'=\mathrm{pr}_{2}:\Gamma_{\sigma}\rightarrow\mathbb{P}_{k}^{2}$,
$\Gamma_{\sigma}$ is canonically identified with the blow-up of the
scheme-theoretic base loci $Z=\mathrm{Bs}(\sigma)$ and $Z'=\mathrm{Bs}(\sigma^{-1})$
respectively. We denote by $e=\tau^{*}Z$ and $e'={\tau'}{}^{*}Z'$
the scheme-theoretic inverse images of $Z$ and $Z'$ respectively
in $\Gamma_{\sigma}$. It is well-known that $Z$ and $Z'$ are local
complete intersection $0$-dimensional closed sub-scheme of length
$3$ of $\mathbb{P}_{k}^{2}$ with one of the following possible structure:

$($Type I$)$ $Z$ (resp. $Z'$) is smooth and its base extension
to $\overline{k}$ consists of three non-collinear points $p_{1},p_{2},p_{3}$
(resp. $p_{1}',p_{2}',p_{3}')$ whose union is defined over $k$.
The surface $\Gamma_{\sigma}$ is smooth, $e_{\overline{k}}=e_{p_{1}}+e_{p_{2}}+e_{p_{3}}$,
$e'_{\overline{k}}=e'_{p_{1}'}+e'_{p_{2}'}+e'_{p_{3}'}$ where the
$e_{p_{i}}$ and $e'_{p_{i}'}$ are $(-1)$-curves. 

$($Type II$)$ $Z$ (resp. $Z'$) consists of the disjoint union
$(p_{1},\mathbf{p}_{2})$ (resp. $(p_{1}',\mathbf{p}_{2}')$) of a
smooth $k$-rational point $p_{1}$ (resp. $p_{1}'$) and a $0$-dimensional
sub-scheme $\mathbf{p}_{2}$ (resp. $\mathbf{p}_{2}')$ of length
$2$ locally isomorphic to $V(x,y^{2})$ and supported at a $k$-rational
point $p_{2}$ (resp. $p_{2}'$). We have $e=e_{p_{1}}+2e_{p_{2}}$,
$e'=e'_{p_{1}'}+2e'_{p_{2}'}$ where $e_{p_{1}}$ and $e_{p_{2}}$
(resp. $e_{p_{1}'}'$ and $e_{p_{2}'}'$) are smooth $k$-rational
curves with self-intersections $-1$ and $-\frac{1}{2}$ respectively,
and $\Gamma_{\sigma}$ has a unique $A_{1}$-singularity at the $k$-rational
point $e_{p_{2}}\cap e_{p_{2}'}'$. 

$($Type III$)$ $Z$ (resp. $Z'$) is supported on a unique $k$-rational
point $p$ (resp. $p'$) and is locally isomorphic to $V(y^{3},x-y^{2})$.
We have $e=3e_{p}$ and $e'=3e_{p'}'$ where $e_{p}$ and $e_{p'}'$
are smooth $k$-rational curves with self-intersection $-\frac{1}{3}$,
and $\Gamma_{\sigma}$ has a unique $A_{2}$-singularity at the $k$-rational
point $e_{p}\cap e'_{p'}$. \\

The following lemma whose proof is left to the reader records for
later use some additional basic properties of the surfaces $\Gamma_{\sigma}$. 
\begin{lem}
\label{lem:Graph-Properties}With the notation above, the following
hold:

a) The canonical divisor $K_{\Gamma_{\sigma}}$ is linearly equivalent
to $-e-e'$.

b) The pull-back by $\tau:\Gamma_{\sigma}\rightarrow\mathbb{P}_{k}^{2}$
$($resp. $\tau':\Gamma_{\sigma}\rightarrow\mathbb{P}_{k}^{2}$$)$
of a general line $\ell\simeq\mathbb{P}_{k}^{1}$ in $\mathbb{P}_{k}^{2}$
is $\mathbb{Q}$-linearly equivalent to $\frac{1}{3}(2e+e')$ $($resp.
$\frac{1}{3}(e+2e')$$)$. 
\end{lem}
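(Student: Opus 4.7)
The plan for part (b) is to exploit the fact that $\sigma$ is defined by the two-dimensional linear sub-system $|\mathcal{O}_{\mathbb{P}_{k}^{2}}(2)\otimes\mathcal{I}_{Z}|$ of conics through $Z$. By the universal property of $\tau:\Gamma_{\sigma}\to\mathbb{P}_{k}^{2}$ as the blow-up of $Z$, the sheaf $\tau^{*}\mathcal{I}_{Z}$ is already invertible and equals $\mathcal{O}_{\Gamma_{\sigma}}(-e)$, so $e$ is automatically a Cartier divisor (even though $e_{p_{2}}$, $e_{p}$ individually are only $\mathbb{Q}$-Cartier). Pulling the defining linear system of $\sigma$ back to $\Gamma_{\sigma}$ removes its base locus and recovers $\tau'$, giving the linear equivalence $\tau'^{*}H\sim 2\tau^{*}H-e$. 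Applying the symmetric argument to $\sigma^{-1}$ yields $\tau^{*}H\sim 2\tau'^{*}H-e'$, and solving this $2\times2$ system produces $3\tau^{*}H\sim 2e+e'$ and $3\tau'^{*}H\sim e+2e'$, which is part (b).

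For part (a), the plan is to pass to the minimal resolution $\mu:\tilde{\Gamma}_{\sigma}\to\Gamma_{\sigma}$ and use that in all three types the singularities of $\Gamma_{\sigma}$ are at worst Du Val ($A_{1}$ in Type II, $A_{2}$ in Type III), so that $\mu$ is crepant: $K_{\tilde{\Gamma}_{\sigma}}=\mu^{*}K_{\Gamma_{\sigma}}$. The composition $\tilde{\tau}=\tau\circ\mu:\tilde{\Gamma}_{\sigma}\to\mathbb{P}_{k}^{2}$ factors as an explicit sequence of three point blow-ups, and I would check that on the smooth resolution one has
\[ K_{\tilde{\Gamma}_{\sigma}}\sim -3\tilde{\tau}^{*}H+\tilde{e}, \]
where $\tilde{e}$ is the Cartier divisor defined by $\tilde{\tau}^{*}\mathcal{I}_{Z}=\mathcal{O}_{\tilde{\Gamma}_{\sigma}}(-\tilde{e})$, equivalently $\tilde{e}=\mu^{*}e$. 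Substituting the equivalence of part (b) pulled back to $\tilde{\Gamma}_{\sigma}$ then yields $K_{\tilde{\Gamma}_{\sigma}}\sim -(2\tilde{e}+\tilde{e}')+\tilde{e}=-\tilde{e}-\tilde{e}'$, and pushing forward by $\mu$, which preserves linear equivalence of Weil divisors and satisfies $\mu_{*}\tilde{e}=e$, $\mu_{*}\tilde{e}'=e'$, gives $K_{\Gamma_{\sigma}}\sim -e-e'$.

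The main obstacle is the verification of the formula $K_{\tilde{\Gamma}_{\sigma}}\sim -3\tilde{\tau}^{*}H+\tilde{e}$ in Types II and III, where the centres of the three successive blow-ups composing $\tilde{\tau}$ are no longer independent: after the first blow-up, each subsequent centre is an infinitely near point dictated by the local structure of $\mathcal{I}_{Z}$. I would handle each type by a direct local computation, tracking in parallel how the canonical class accumulates the reduced exceptional divisors at each step and how the total transform of $\mathcal{I}_{Z}$ accumulates those same exceptionals with multiplicities equal to the order of vanishing of the evolving ideal along the successive infinitely near centres. In Type II, starting from $V(x,y^{2})$ at $p_{2}$, these multiplicities are $1$ on the first exceptional over $p_{2}$ and $2$ on the second; in Type III, starting from $V(y^{3},x-y^{2})$, they are $1,2,3$ on the chain of three exceptionals above the support of $Z$. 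In each case, the two accounting procedures produce the same divisor on $\tilde{\Gamma}_{\sigma}$, which yields the required formula and completes the argument.
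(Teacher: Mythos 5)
The paper does not actually supply an argument here---the lemma is stated ``whose proof is left to the reader''---so there is no proof of record to compare yours against; your outline is a correct way to fill that gap. For (b), the relations $\tau'^{*}H\sim 2\tau^{*}H-e$ and $\tau^{*}H\sim 2\tau'^{*}H-e'$ do follow from the paper's identification of $\Gamma_{\sigma}$ with the blow-up of the scheme-theoretic base locus (so $\tau^{*}\mathcal{I}_{Z}=\mathcal{O}_{\Gamma_{\sigma}}(-e)$ is invertible and the pulled-back net, with its base part removed, is exactly the system defining $\tau'$), and solving gives the stated $\mathbb{Q}$-linear equivalences; note that $3\tau^{*}H\sim 2e+e'$ is in fact an integral linear equivalence, which is what makes the conclusion of (a) an honest linear equivalence rather than only a $\mathbb{Q}$-linear one. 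For (a), your reduction to the identity $K_{\tilde{\Gamma}_{\sigma}}\sim-3\tilde{\tau}^{*}H+\tilde{e}$ on the minimal resolution is sound: the singularities are Du Val ($A_{1}$, $A_{2}$), so the minimal resolution is crepant, $\mu^{*}e=\tilde{e}$ because $e$ is Cartier, and the multiplicities you announce are the right ones---in Type II the discrepancy divisor of the two point blow-ups is $E_{1}'+2E_{2}$ and $\mathrm{ord}_{E_{1}}(\mathcal{I}_{Z})=1$, $\mathrm{ord}_{E_{2}}(\mathcal{I}_{Z})=2$; in Type III the discrepancy divisor is $E_{1}'+2E_{2}'+3E_{3}$ and the orders of $\mathcal{I}_{Z}$ along the chain are $1,2,3$---so the two accountings coincide with $\tilde{e}$ in each case, and pushing forward by $\mu$ (which contracts exactly the $(-2)$-curves) gives $K_{\Gamma_{\sigma}}\sim-3\tau^{*}H+e\sim -e-e'$. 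The only verifications left implicit are routine: that the two- (resp.\ three-) step point blow-up you construct does dominate $\Gamma_{\sigma}$ (by the universal property of the blow-up, since the pullback of $\mathcal{I}_{Z}$ becomes invertible) and is indeed the \emph{minimal} resolution (only $(-2)$-curves are contracted), which is needed for crepancy.
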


\subsection{\label{subsec:ProjectiveBundle}Locally trivial $\mathbb{P}^{1}$-bundles
over $\mathbb{P}^{2}$ associated to quadratic involutions }

Let $\sigma:\mathbb{P}_{k}^{2}\dashrightarrow\mathbb{P}_{k}^{2}$
be a birational quadratic involution of $\mathbb{P}_{k}^{2}$ with
graph $\Gamma_{\sigma}\subset\mathbb{P}_{k}^{2}\times\mathbb{P}_{k}^{2}$,
and let $\mathcal{I}_{Z}\subset\mathcal{O}_{\mathbb{P}_{k}^{2}}$
be the ideal sheaf of its scheme-theoretic base locus $Z$. 
\begin{lem}
\label{lem:Vector-bundle}There exists a locally free sheaf $\mathcal{E}$
of rank $2$ on $\mathbb{P}_{k}^{2}$ and an exact sequence 
\begin{equation}
0\rightarrow\mathcal{O}_{\mathbb{P}_{k}^{2}}(-1)\stackrel{s}{\longrightarrow}\mathcal{E}\longrightarrow\mathcal{I}_{Z}\rightarrow0.\label{eq:Bundle-sequence}
\end{equation}
\end{lem}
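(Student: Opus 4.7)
The plan is to apply Serre's construction for codimension-two local complete intersection subschemes to the pair $(Z,\mathcal{O}_{\mathbb{P}_k^2}(-1))$. Concretely, I would produce $\mathcal{E}$ as the middle term of a carefully chosen extension class $\xi\in\mathrm{Ext}^1_{\mathcal{O}_{\mathbb{P}_k^2}}(\mathcal{I}_Z,\mathcal{O}_{\mathbb{P}_k^2}(-1))$; the standard Serre--Hartshorne criterion guarantees that $\mathcal{E}$ is locally free provided that the image of $\xi$ in $\mathcal{E}xt^1(\mathcal{I}_Z,\mathcal{O}(-1))$ is a generator at every stalk of $Z$.

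The first step is to compute $\mathrm{Ext}^1(\mathcal{I}_Z,\mathcal{O}(-1))$ via the local-to-global spectral sequence. Reflexivity of $\mathcal{I}_Z$ on the smooth surface $\mathbb{P}_k^2$ gives $\mathcal{H}om(\mathcal{I}_Z,\mathcal{O}(-1))\cong\mathcal{O}(-1)$, while the local complete intersection hypothesis, combined with the adjunction formula $\omega_Z\cong(\omega_{\mathbb{P}_k^2}\otimes\det\mathcal{N}_{Z/\mathbb{P}_k^2})|_Z$, yields
\[
\mathcal{E}xt^1(\mathcal{I}_Z,\mathcal{O}(-1))\cong\mathcal{E}xt^2(\mathcal{O}_Z,\mathcal{O}(-1))\cong\omega_Z\otimes\mathcal{O}(2)|_Z,
\]
a line bundle on $Z$, and the higher $\mathcal{E}xt$-sheaves vanish. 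Using $H^1(\mathbb{P}_k^2,\mathcal{O}(-1))=H^2(\mathbb{P}_k^2,\mathcal{O}(-1))=0$, the spectral sequence collapses to
\[
\mathrm{Ext}^1(\mathcal{I}_Z,\mathcal{O}(-1))\cong H^0(Z,\omega_Z\otimes\mathcal{O}(2)|_Z).
\]

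Since $Z$ is zero-dimensional, any line bundle on it is globally trivial, so $H^0(Z,\omega_Z\otimes\mathcal{O}(2)|_Z)$ identifies with the three-dimensional $k$-algebra $H^0(Z,\mathcal{O}_Z)$, and I would take $\xi$ to be the class corresponding to its unit section --- an element whose value in each residue field of $Z$ is nonzero. It then remains to verify that the resulting extension has locally free middle term. Off $Z$ this is automatic since $\mathcal{I}_Z$ is invertible there. At a closed point $p\in Z$, choose local coordinates with $\mathcal{I}_{Z,p}=(x,y)$ for a regular sequence; the Koszul extension $0\to\mathcal{O}_p\xrightarrow{(y,-x)}\mathcal{O}_p^{\oplus 2}\to(x,y)\to 0$ is a canonical generator of the free rank-one $\mathcal{O}_{Z,p}$-module $\mathcal{E}xt^1(\mathcal{I}_{Z,p},\mathcal{O}_p)$, and by construction $\xi_p$ differs from it by a unit in $\mathcal{O}_{Z,p}$, forcing $\mathcal{E}_p\cong\mathcal{O}_p^{\oplus 2}$. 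The one substantive obstacle is the stalkwise generation of $\mathcal{E}xt^1$ by a single global extension class --- this is the heart of Serre's construction --- but here it is automatic precisely because $\dim Z=0$ makes every line bundle on $Z$ globally trivial.
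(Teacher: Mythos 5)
Your proposal is correct and follows essentially the same route as the paper: Serre's construction applied to $(Z,\mathcal{O}_{\mathbb{P}_k^2}(-1))$, the local-to-global spectral sequence collapsing (via $H^1=H^2=0$ for $\mathcal{O}_{\mathbb{P}_k^2}(-1)$ and $\dim Z=0$) to an identification $\mathrm{Ext}^1(\mathcal{I}_Z,\mathcal{O}_{\mathbb{P}_k^2}(-1))\simeq H^0(Z,\mathcal{O}_Z)$, and the choice of the extension class corresponding to the unit section. Your explicit Koszul-complex verification of local freeness at the points of $Z$ only spells out what the paper leaves implicit in invoking the Serre correspondence.
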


\begin{proof}
Since $Z$ is a local complete intersection of codimension $2$ in
$\mathbb{P}_{k}^{2}$, the existence of $\mathcal{E}$ with the required
properties follows from Serre correspondence \cite{Se61}. More precisely,
the local-to-global spectral sequence 
\[
E_{2}^{p,q}=H^{P}(\mathbb{P}_{k}^{2},\mathcal{E}xt^{q}(\mathcal{I}_{Z},\mathcal{O}_{\mathbb{P}_{k}^{2}}(-1))\Rightarrow\mathrm{Ext}^{p+q}(\mathcal{I}_{Z},\mathcal{O}_{\mathbb{P}_{k}^{2}}(-1))
\]
degenerates to a long exact sequence 
\[
0\rightarrow H^{1}(\mathbb{P}_{k}^{2},\mathcal{O}_{\mathbb{P}_{k}^{2}}(-1))\rightarrow\mathrm{Ext}^{1}(\mathcal{I}_{Z},\mathcal{O}_{\mathbb{P}_{k}^{2}}(-1))\rightarrow H^{0}(Z,\det\mathcal{N}_{Z/\mathbb{P}_{k}^{2}}\varotimes\mathcal{O}_{Z}(-1))\rightarrow H^{2}(\mathbb{P}_{k}^{2},\mathcal{O}_{\mathbb{P}_{k}^{2}}(-1)).
\]
Since $H^{i}(\mathbb{P}_{k}^{2},\mathcal{O}_{\mathbb{P}_{k}^{2}}(-1))=0$
for $i=1,2$ and $Z$ is $0$-dimensional, this sequence provides
an isomorphism 
\[
\mathrm{Ext}^{1}(\mathcal{I}_{Z},\mathcal{O}_{\mathbb{P}_{k}^{2}}(-1))\simeq H^{0}(Z,\det\mathcal{N}_{Z/\mathbb{P}_{k}^{2}}\varotimes\mathcal{O}_{Z}(-1))\simeq H^{0}(Z,\mathcal{O}_{Z}).
\]
The extension corresponding via this isomorphism to the constant section
$1\in H^{0}(Z,\mathcal{O}_{Z})$ has the desired property. 
\end{proof}
Let $\pi:\mathbb{P}(\mathcal{E})=\mathrm{Proj}(\mathrm{Sym}^{\cdot}\mathcal{E})\rightarrow\mathbb{P}_{k}^{2}$
be the locally trivial $\mathbb{P}^{1}$-bundle associated with the
locally free sheaf $\mathcal{E}$ of rank $2$ as in Lemma \ref{lem:Vector-bundle}.
Since $\det\mathcal{E}\simeq\mathcal{O}_{\mathbb{P}_{k}^{2}}(-1)$,
the canonical sheaf $\omega_{\mathbb{P}(\mathcal{E})}$ of $\mathbb{P}(\mathcal{E})$
is isomorphic to 
\begin{equation}
\mathcal{O}_{\mathbb{P}(\mathcal{E})}(-2)\varotimes\pi^{*}\det\mathcal{E}\varotimes\pi^{*}\omega_{\mathbb{P}_{k}^{2}}\simeq\mathcal{O}_{\mathbb{P}(\mathcal{E})}(-2)\varotimes\pi^{*}\mathcal{O}_{\mathbb{P}_{k}^{2}}(-4).\label{eq:canonical-bundle}
\end{equation}
The surjection $\mathcal{E}\rightarrow\mathcal{I}_{Z}\rightarrow0$
defines a closed embedding $\mathbb{P}(\mathcal{I}_{Z})=\mathrm{Proj}(\mathrm{Sym}^{\cdot}\mathcal{I}_{Z})\hookrightarrow\mathbb{P}(\mathcal{E})$.
Since $Z$ is a local complete intersection, the canonical homomorphism
of graded $\mathcal{O}_{\mathbb{P}_{k}^{2}}$-algebras $\mathrm{Sym}^{\cdot}\mathcal{I}_{Z}\rightarrow\mathcal{R}(\mathcal{I}_{Z})=\bigoplus_{n\geq0}\mathcal{I}_{Z}\cdot t^{n}$
is an isomorphism \cite[Théorème 1]{Mi64}, and it follows that the
restriction $\pi:\mathbb{P}(\mathcal{I}_{Z})\rightarrow\mathbb{P}_{k}^{2}$
is isomorphic to the blow-up $\tau:\Gamma_{\sigma}\rightarrow\mathbb{P}_{k}^{2}$
of $Z$. We can thus identify from now on $\Gamma_{\sigma}$ with
the closed sub-scheme $\mathbb{P}(\mathcal{I}_{Z})$ of $\mathbb{P}(\mathcal{E})$.
The composition of $\pi^{*}s:\pi^{*}\mathcal{O}_{\mathbb{P}_{k}^{1}}(-1)\rightarrow\pi^{*}\mathcal{E}$
with the canonical surjection $\pi^{*}\mathcal{E}\rightarrow\mathcal{O}_{\mathbb{P}(\mathcal{E})}(1)$
defines a global section 
\[
\overline{s}\in\mathrm{Hom}_{\mathbb{P}(\mathcal{E})}(\pi^{*}\mathcal{O}_{\mathbb{P}_{k}^{1}}(-1),\mathcal{O}_{\mathbb{P}(\mathcal{E})}(1))\simeq H^{0}(\mathbb{P}(\mathcal{E}),\mathcal{O}_{\mathbb{P}(\mathcal{E})}(1)\varotimes\pi^{*}\mathcal{O}_{\mathbb{P}_{k}^{2}}(1))
\]
whose zero locus $V(\overline{s})$ coincides with $\Gamma_{\sigma}$.
\\

Letting $\xi$ and $A$ be the classes in the divisor class group
of $\mathbb{P}(\mathcal{E})$ of $\mathcal{O}_{\mathbb{P}(\mathcal{E})}(1)$
and of the inverse image of a general line in $\mathbb{P}_{k}^{2}$
by $\pi$ respectively, we have $\Gamma_{\sigma}=V(\overline{s})\sim\xi+A$.
On the other hand, it follows from (\ref{eq:canonical-bundle}) that
the canonical divisor $K_{\mathbb{P}(\mathcal{E})}$ of $\mathbb{P}(\mathcal{E})$
is linearly equivalent to $-2(\Gamma_{\sigma}+A)\sim-2(\xi+2A)$.
Since $c_{1}(\mathcal{E})=-1$ and $c_{2}(\mathcal{E})=3$ by construction,
we derive the following numerical information:
\begin{equation}
\begin{array}{cccc}
K_{\mathbb{P}(\mathcal{E})}^{3}=-32,\quad & K_{\mathbb{P}(\mathcal{E})}^{2}\cdot\Gamma_{\sigma}=4,\quad & K_{\mathbb{P}(\mathcal{E})}\cdot\Gamma_{\sigma}^{2}=2,\quad & \Gamma_{\sigma}^{3}=-2.\end{array}\label{eq:(triple intersections)}
\end{equation}

\subsection{\label{subsec:Sarkisov-DProj}Construction of Sarkisov links}
\begin{prop}
\label{prop:DoubleProj-SarkisovLink} Let $\pi:\mathbb{P}(\mathcal{E})\rightarrow\mathbb{P}_{k}^{2}$
be the $\mathbb{P}^{1}$-bundle associated to a quadratic involution
$\sigma:\mathbb{P}_{k}^{2}\dashrightarrow\mathbb{P}_{k}^{2}$ with
graph $\Gamma_{\sigma}\subset\mathbb{P}(\mathcal{E})$ as in \S \ref{subsec:ProjectiveBundle}.
Then there exists a $k$-form $Y$ of $V_{5}$ and a Sarkisov link
\[\xymatrix@R=1em@C=2em{ & \Gamma_\sigma  \ar@{^{(}->}[rr] \ar[ddr]_{\tau} & & \mathbb{P}(\mathcal{E}) \ar[ddl]_{\pi} \ar[dr] \ar@{-->}[rr]^{\varphi} & & X^{+} \ar[dl] \ar[ddr]^{\theta} & & \Gamma_{\sigma}^{+}\ar@{_{(}->}[ll] \ar[ddr]^{\theta|_{\Gamma^{+}}} \\ & & & & X_0 & & & &\\ &   & \mathbb{P}^2_{k} & & & & Y  & & \{y\}  \ar@{_{(}->}[ll]} \]
where:

\begin{enumerate}[label=$($\roman*$)$]

\item $\varphi:\mathbb{P}(\mathcal{E})\dashrightarrow X^{+}$ is
a flop whose flopping locus coincides with the support of $e'\subset\Gamma_{\sigma}$,

\item $\Gamma_{\sigma}^{+}\simeq\mathbb{P}_{k}^{2}$ is the proper
transform of $\Gamma_{\sigma}$ and $\theta|_{\Gamma_{\sigma}}:\Gamma_{\sigma}\dashrightarrow\Gamma_{\sigma}^{+}$
is the contraction of $e'$,

\item $\theta:X^{+}\rightarrow Y$ is the divisorial contraction
of $\Gamma_{\sigma}^{+}$ to a smooth $k$-rational point $y\in Y$,

\item The support of the image in $Y$ of the flopped locus $e^{+}\subset X^{+}$
of $\varphi$ coincides with the union of the lines in $Y_{\overline{k}}$
passing through $y$. 

\end{enumerate}
\end{prop}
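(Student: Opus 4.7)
Plan. The strategy is to construct, in succession, a small contraction $\mathbb{P}(\mathcal{E})\to X_{0}$, its Atiyah flop $X^{+}\to X_{0}$, and a divisorial contraction $\theta:X^{+}\to Y$; the resulting variety $Y$ is then identified as a $k$-form of $V_{5}$ through its numerical invariants. To locate the flopping curves, I would observe that the quotient $\mathcal{E}\twoheadrightarrow\mathcal{I}_{Z}$ defining the embedding $\Gamma_{\sigma}=\mathbb{P}(\mathcal{I}_{Z})\hookrightarrow\mathbb{P}(\mathcal{E})$ is trivial on $\mathbb{P}_{k}^{2}\setminus Z$, so $\xi|_{\Gamma_{\sigma}}$ is supported on $e$; matching with the intersections $\xi\cdot e_{p_{i}}=1$ (each $e_{p_{i}}$ being a fiber of $\pi$) forces $\xi|_{\Gamma_{\sigma}}\sim_{\mathbb{Q}}-e$. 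Combined with Lemma~\ref{lem:Graph-Properties} and adjunction along $\Gamma_{\sigma}\subset\mathbb{P}(\mathcal{E})$, this yields
\[
\Gamma_{\sigma}|_{\Gamma_{\sigma}}\sim_{\mathbb{Q}}\tfrac{1}{3}(e'-e),\qquad K_{\mathbb{P}(\mathcal{E})}|_{\Gamma_{\sigma}}\sim_{\mathbb{Q}}-\tfrac{2}{3}e-\tfrac{4}{3}e'.
\]
A case-by-case analysis of the intersection matrix of $(e,e')$ on $\Gamma_{\sigma}$ in types I, II, III then shows that $K_{\mathbb{P}(\mathcal{E})}\cdot C=0$ for every irreducible component $C$ of $e'$, while the normal bundle sequence $0\to\mathcal{N}_{C/\Gamma_{\sigma}}\to\mathcal{N}_{C/\mathbb{P}(\mathcal{E})}\to\mathcal{N}_{\Gamma_{\sigma}/\mathbb{P}(\mathcal{E})}|_{C}\to0$, with both kernel and quotient equal to $\mathcal{O}_{\mathbb{P}^{1}}(-1)$, identifies $\mathcal{N}_{C/\mathbb{P}(\mathcal{E})}\simeq\mathcal{O}_{\mathbb{P}^{1}}(-1)^{\oplus2}$ as an Atiyah-flop normal bundle.

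Since $-\tfrac{1}{2}K_{\mathbb{P}(\mathcal{E})}=\xi+2A$ is nef and has positive intersection with every curve outside $\mathrm{supp}(e')$, the base-point-free theorem yields a small contraction $\mathbb{P}(\mathcal{E})\to X_{0}$ whose exceptional locus is $\mathrm{supp}(e')$, and the Atiyah flop produces a smooth $X^{+}$, proving (i). The restriction $\varphi|_{\Gamma_{\sigma}}$ coincides with $\tau':\Gamma_{\sigma}\to\mathbb{P}_{k}^{2}$, so $\Gamma_{\sigma}^{+}\simeq\mathbb{P}_{k}^{2}$, giving (ii). To construct $\theta$, I would compute $\mathcal{N}_{\Gamma_{\sigma}^{+}/X^{+}}$: for a general line $\ell\subset\Gamma_{\sigma}^{+}\simeq\mathbb{P}_{k}^{2}$ with proper transform $\tilde{\ell}$ on $\Gamma_{\sigma}$ via $\tau'$, the canonical identification of Picard groups under the flop allows me to evaluate
\[
K_{X^{+}}\cdot\ell=K_{\mathbb{P}(\mathcal{E})}|_{\Gamma_{\sigma}}\cdot\tilde{\ell}=\tfrac{1}{9}(-2e-4e')\cdot(e+2e')=-2
\]
using the intersection matrix of $(e,e')$; adjunction on $\Gamma_{\sigma}^{+}$ then gives $\Gamma_{\sigma}^{+}|_{\Gamma_{\sigma}^{+}}\sim-H$, so $\mathcal{N}_{\Gamma_{\sigma}^{+}/X^{+}}\simeq\mathcal{O}_{\mathbb{P}_{k}^{2}}(-1)$. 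The classical Fujiki-Nakano contractibility criterion then produces the divisorial contraction $\theta:X^{+}\to Y$ onto a smooth projective variety $Y$ collapsing $\Gamma_{\sigma}^{+}$ to a $k$-rational point $y$, realizing $X^{+}$ as the blow-up of $Y$ at $y$ and proving (iii).

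To show $Y$ is a $k$-form of $V_{5}$, I would check that $\mathrm{Pic}(Y)$ has rank one (since $\theta$ contracts a single extremal ray), that $-K_{Y}=2H_{Y}$ for a Cartier class $H_{Y}$ (from $\theta^{*}K_{Y}=K_{X^{+}}-2\Gamma_{\sigma}^{+}=-2(\xi^{+}+2A^{+}+\Gamma_{\sigma}^{+})$), and that $H_{Y}^{3}=5$, using the blow-up formula $K_{X^{+}}^{3}=K_{Y}^{3}+8$, the preservation of $K^{3}$ under flops, and the value $K_{\mathbb{P}(\mathcal{E})}^{3}=-32$ from~(\ref{eq:(triple intersections)}), giving $K_{Y}^{3}=-40$; Iskovskikh's classification then forces $Y_{\overline{k}}\simeq V_{5}$. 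For (iv), each component $C$ of $e^{+}$ meets $\Gamma_{\sigma}^{+}$ transversally in one point by the Atiyah-flop structure, hence $\theta(C)\ni y$; combining $K_{X^{+}}\cdot C=0$ and $\Gamma_{\sigma}^{+}\cdot C=1$ in the formula $\theta^{*}K_{Y}=K_{X^{+}}-2\Gamma_{\sigma}^{+}$ yields $-K_{Y}\cdot\theta(C)=2$, so $\theta(C)$ is a line through $y$, and every line through $y$ arises this way because its proper transform in $X^{+}_{\overline{k}}$ is $K$-trivial, not contained in $\Gamma_{\sigma,\overline{k}}^{+}$, and hence must lie on the extremal ray of the inverse flop. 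The main obstacle will be the uniform handling of types II and III of the involution $\sigma$: in those cases $\Gamma_{\sigma}$ carries $A_{1}$ or $A_{2}$ singularities and $e,e'$ are only $\mathbb{Q}$-Cartier, so the base-point-free theorem must be invoked in the klt setting and the verification that the flopping curves admit Atiyah-type normal bundles inside the smooth ambient $\mathbb{P}(\mathcal{E})$ requires a careful local study near the singularities of $\Gamma_{\sigma}$.
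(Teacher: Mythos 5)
Your overall architecture is exactly that of the paper: identify the $K$-trivial curves of $\mathbb{P}(\mathcal{E})$ as the components of $e'$, flop them, observe that $\varphi|_{\Gamma_{\sigma}}=\tau'$ so that $\Gamma_{\sigma}^{+}\simeq\mathbb{P}_{k}^{2}$ with normal bundle $\mathcal{O}_{\mathbb{P}_{k}^{2}}(-1)$, contract it to a smooth $k$-point, and identify $Y$ through $\rho(Y)=1$, index $2$ and $K_{Y}^{3}=-40$. Several of your local computations are correct and in places cleaner than the paper's: $\xi|_{\Gamma_{\sigma}}\sim_{\mathbb{Q}}-e$, the value $K_{X^{+}}\cdot\ell=-2$ giving $\mathcal{N}_{\Gamma_{\sigma}^{+}/X^{+}}\simeq\mathcal{O}(-1)$, the computation of $K_{Y}^{3}$ via flop-invariance of $K^{3}$ plus the blow-up formula (which avoids having to track how $(\Gamma_{\sigma}^{+})^{3}$ changes under the flop), and your argument for (iv), which the paper leaves as ``clear by construction.''

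The one step that would fail as written is the claim that every irreducible component $C$ of $e'$ has $\mathcal{N}_{C/\mathbb{P}(\mathcal{E})}\simeq\mathcal{O}_{\mathbb{P}^{1}}(-1)^{\oplus2}$, deduced from the sequence $0\to\mathcal{N}_{C/\Gamma_{\sigma}}\to\mathcal{N}_{C/\mathbb{P}(\mathcal{E})}\to\mathcal{N}_{\Gamma_{\sigma}/\mathbb{P}(\mathcal{E})}|_{C}\to0$. This is only valid in Type I: in Types II and III the relevant component of $e'$ passes through the $A_{1}$- or $A_{2}$-singularity of $\Gamma_{\sigma}$, the sequence is not a sequence of line bundles, and the flopping curve is in fact a wider one (of type $(0,-2)$ or $(1,-3)$, a Reid pagoda), so the flop is not an Atiyah flop. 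You correctly flag this as the main obstacle, but it does not need to be resolved by a local study: the paper avoids normal bundles entirely by writing $-K_{\mathbb{P}(\mathcal{E})}|_{\Gamma_{\sigma}}\sim_{\mathbb{Q}}\frac{2}{3}(e+2e')$ and using that $e+2e'$ is $\tau$-ample and $\tau'$-numerically trivial (Lemma \ref{lem:Graph-Properties} b)), so the $K$-trivial curves are precisely the components of $e'$ in all three types; one then invokes the general existence of threefold flops and Koll\'ar's theorem that the flop of a smooth threefold is smooth, rather than the explicit Atiyah model. With that substitution (and the small additional check, present in the paper, that every $K$-nonpositive curve lies in $\Gamma_{\sigma}$ because $-K\sim2\Gamma_{\sigma}+2A$ with $A$ nef), your argument goes through uniformly in all three types.
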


\begin{proof}
We denote $\mathbb{P}(\mathcal{E})$ and $\Gamma_{\sigma}\subset\mathbb{P}(\mathcal{E})$
simply by $X$ and $\Gamma$. Since $-K_{X}\sim2\xi+4A\sim2\Gamma+2A$
we see that $-K_{X}$ is nef and that any irreducible curve $C\subset X_{\overline{k}}$
such that $-K_{X_{\overline{k}}}\cdot C\leq0$ is contained in $\Gamma_{\overline{k}}$.
By the adjunction formula and Lemma \ref{lem:Graph-Properties} a),
we have 
\begin{align*}
\Gamma^{2}=-K_{\Gamma}-2A\cdot\Gamma & \sim_{\mathbb{Q}}\frac{1}{3}(-e+e')\quad\textrm{and}\quad\Gamma^{2}-K_{\Gamma}\sim_{\mathbb{Q}}\frac{2}{3}(e+2e'),
\end{align*}
which implies by adjunction again that 
\begin{align*}
-K_{X_{\overline{k}}}\cdot C & =(\Gamma_{\overline{k}}^{2}-K_{\Gamma_{\overline{k}}})\cdot C=\frac{2}{3}(e{}_{\overline{k}}+2e_{\overline{k}}')\cdot C.
\end{align*}
Since $e+2e'$ is $\tau$-ample and $\tau'$-numerically trivial by
virtue of Lemma \ref{lem:Graph-Properties} b), we conclude that the
irreducible curves $C\subset X_{\overline{k}}$ such that $-K_{X_{\overline{k}}}\cdot C=0$
are precisely the irreducible components of the exceptional locus
$e'_{\overline{k}}$ of $\tau'_{\overline{k}}:\Gamma_{\overline{k}}\rightarrow\mathbb{P}_{\overline{k}}^{2}$
(see $\S$ \ref{subsec:Quadric-involutions} for the notation). 

Let $\varphi:X\dashrightarrow X^{+}$ be the flop of the union of
the irreducible components of $e'_{\overline{k}}$. Since the union
of these components is defined over $k$, so is the union $e^{+}$
of the flopped curves of $\varphi$, and so $X^{+}$ is a smooth threefold
defined over $k$ and $\varphi$ is a birational map defined over
$k$ restricting to an isomorphism between $X\setminus e'$ and $X^{+}\setminus e^{+}$.
Let $\Gamma^{+}$ and $A^{+}$ be the proper transforms in $X^{+}$
of $\Gamma$ and $A$ respectively. By construction, the restriction
$\varphi|_{\Gamma}:\Gamma\dashrightarrow\Gamma^{+}$ coincide with
$\tau':\Gamma\rightarrow\mathbb{P}_{k}^{2}$. Since $K_{X^{+}}\sim-2(\Gamma^{+}+A^{+})$,
we deduce from the adjunction formula that 
\[
-(\Gamma^{+})^{2}=K_{\Gamma^{+}}+2A^{+}\cdot\Gamma^{+}=\tau'_{*}(K_{\Gamma}+2A\cdot\Gamma)=\frac{1}{3}\tau'_{*}e,
\]
which is linearly equivalent to a line $\ell\simeq\mathbb{P}_{k}^{1}$
in $\Gamma^{+}\simeq\mathbb{P}_{k}^{2}$. The normal bundle $\mathcal{N}_{\Gamma^{+}/X^{+}}$
of $\Gamma^{+}$ in $X^{+}$ is thus isomorphic to $\mathcal{O}_{\mathbb{P}_{k}^{2}}(-1)$. 

The divisor class group of $X^{+}$ is freely generated by $\Gamma^{+}$
and $A^{+}$. The Mori cone $\mathrm{NE}(X^{+})$ is spanned by two
extremal rays: one $R_{1}$ corresponding to the flopped curves of
$\varphi$ and a second one $R_{2}$ which is $K_{X^{+}}$-negative.
Since $\Gamma^{+}\cdot C\geq0$ for any irreducible curve not contained
in $\Gamma^{+}$, including thus the irreducible components of $e^{+}$,
whereas $\Gamma^{+}\cdot\ell=-1$ for any line $\ell\simeq\mathbb{P}_{k}^{1}$
in $\Gamma^{+}\simeq\mathbb{P}_{k}^{2}$, it follows that $R_{2}$
is generated by the class of $\ell$. The extremal contraction associated
to $R_{2}$ is thus the divisorial contraction $\theta:X^{+}\rightarrow Y$
of $\Gamma^{+}\simeq\mathbb{P}_{k}^{2}$ to a smooth $k$-rational
point $p$ of a smooth projective threefold $Y$. 

Since $-K_{X^{+}}=2(\Gamma^{+}+A^{+})$, we conclude that the image
$H$ of $A^{+}$ by $\theta$ is an ample divisor on $Y$ generating
the divisor class group of $Y$ and such that $-K_{Y}=2H$. Furthermore,
we have 
\begin{align*}
K_{Y}^{3} & =\theta^{*}(K_{Y})^{3}=(K_{X^{+}}-2\Gamma^{+})^{3}\\
 & =K_{X^{+}}^{3}-6K_{X^{+}}^{2}\cdot\Gamma^{+}+12K_{X^{+}}\cdot(\Gamma^{+})^{2}-8(\Gamma^{+})^{3}\\
 & =K_{X}^{3}-6K_{X}^{2}\cdot\Gamma+12K_{X}\cdot\Gamma-8
\end{align*}
so that $K_{Y}^{3}=-40$ by (\ref{eq:(triple intersections)}). Altogether,
this shows $Y_{\overline{k}}$ is a smooth Fano threefold of Picard
number $1$, index $2$ and degree $d=H^{3}=5$, hence is isomorphic
to $V_{5}$. 

The fact that the support of the image in $Y$ of $e^{+}$ coincides
with the union of the lines in $Y_{\overline{k}}$ passing through
$p$ is clear by construction. 
\end{proof}
By construction, the proper transform by the reverse composition $\psi_{y}=\pi\circ\varphi^{-1}\circ\theta^{-1}:Y\dashrightarrow\mathbb{P}(\mathcal{E})\stackrel{\pi}{\rightarrow}\mathbb{P}_{k}^{2}$
of the complete linear system of lines in $\mathbb{P}_{k}^{2}$ consists
of divisors $H$ on $Y$ singular at $y$ and such that $-K_{Y}=2H$.
This shows that $\psi_{y,\overline{k}}:Y_{\overline{k}}\simeq V_{5}\dashrightarrow\mathbb{P}_{\overline{k}}^{2}$
coincides with the double projection from the point $y$. Conversely,
given any $k$-form $Y$ of $V_{5}$, Corollary \ref{cor:FormRational-Divisible}
ensures that $-K_{Y}$ is divisible, equal to $2H$ for some ample
divisor $H$ on $Y$. So given any $k$-rational point $y\in Y$,
the double projection $\psi_{y}:Y\dashrightarrow\mathbb{P}_{k}^{2}$
from $y$ is defined over $k$, given by the linear system $|\mathcal{O}_{Y}(H)\varotimes\mathfrak{m}_{y}^{2}|$,
and it coincides with the composition $\pi\circ\varphi^{-1}\circ\theta^{-1}:Y\dashrightarrow\mathbb{P}(\mathcal{E})\stackrel{\pi}{\rightarrow}\mathbb{P}_{k}^{2}$
for a suitable quadratic birational involution $\sigma$ of $\mathbb{P}_{k}^{2}$. 

\section{\label{sec:4}Application : cylinders in forms of the quintic del
Pezzo threefold}
\begin{thm}
Let $Y$ be a $k$-form of $V_{5}$, let $y\in Y$ be a $k$-rational
point and let $C$ be the union of the lines in $Y_{\overline{k}}$
passing through $y$. Then $Y\setminus C$ has the structure of a
Zariski locally trivial $\mathbb{A}^{1}$-bundle $\rho:Y\setminus C\rightarrow\mathbb{P}_{k}^{2}\setminus Z$
over the complement of a closed sub-scheme $Z\subset\mathbb{P}_{k}^{2}$
of length $3$ with as many irreducible geometric components as $C$. 
\end{thm}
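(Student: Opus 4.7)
The plan is to run the Sarkisov link of Proposition \ref{prop:DoubleProj-SarkisovLink} in reverse from the pair $(Y,y)$. By Corollary \ref{cor:FormRational-Divisible}, $-K_{Y}$ is divisible by $2$ in $\mathrm{Pic}(Y)$, so the double projection $\psi_{y}:Y\dashrightarrow\mathbb{P}_{k}^{2}$ from the $k$-rational point $y$ is defined over $k$; by the converse construction at the end of Section \ref{sec:3}, it factors as $\pi\circ\varphi^{-1}\circ\theta^{-1}$ for some birational quadratic involution $\sigma$ of $\mathbb{P}_{k}^{2}$ whose scheme-theoretic base locus $Z$ has length $3$. I take $\rho$ to be the restriction of $\psi_{y}$ to $Y\setminus C$ and verify that it exhibits $Y\setminus C$ as a Zariski locally trivial $\mathbb{A}^{1}$-bundle over $\mathbb{P}_{k}^{2}\setminus Z$.

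The core identification is $Y\setminus C\simeq\mathbb{P}(\mathcal{E})\setminus\Gamma_{\sigma}$. Since $\theta$ is an isomorphism above $Y\setminus\{y\}$, contracts $\Gamma_{\sigma}^{+}$ to $y$, and satisfies $\theta(e^{+})=C$ by Proposition \ref{prop:DoubleProj-SarkisovLink}$(iii)$--$(iv)$, one has $\theta^{-1}(Y\setminus C)=X^{+}\setminus(\Gamma_{\sigma}^{+}\cup e^{+})$. By parts $(i)$--$(ii)$ of the same proposition, $\varphi$ restricts to an isomorphism $\mathbb{P}(\mathcal{E})\setminus e'\simeq X^{+}\setminus e^{+}$ under which $\Gamma_{\sigma}\setminus e'$ is identified with $\Gamma_{\sigma}^{+}$ minus its (finite) intersection with $e^{+}$, so
\[
Y\setminus C\;\simeq\;X^{+}\setminus(\Gamma_{\sigma}^{+}\cup e^{+})\;\simeq\;\mathbb{P}(\mathcal{E})\setminus\Gamma_{\sigma}.
\]
To see that $\pi$ endows $\mathbb{P}(\mathcal{E})\setminus\Gamma_{\sigma}$ with a Zariski locally trivial $\mathbb{A}^{1}$-bundle structure over $\mathbb{P}_{k}^{2}\setminus Z$, recall from \S\ref{subsec:ProjectiveBundle} that $\Gamma_{\sigma}$ sits inside $\mathbb{P}(\mathcal{E})$ as the closed subscheme cut out by the quotient $\mathcal{E}\to\mathcal{I}_{Z}$ of Lemma \ref{lem:Vector-bundle}. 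Restricting (\ref{eq:Bundle-sequence}) to $U=\mathbb{P}_{k}^{2}\setminus Z$, where $\mathcal{I}_{Z}|_{U}\simeq\mathcal{O}_{U}$, presents $\Gamma_{\sigma}\cap\pi^{-1}(U)$ as a section of the $\mathbb{P}^{1}$-bundle $\pi|_{U}$; the complement of such a section is a Zariski locally trivial $\mathbb{A}^{1}$-bundle, and the containment $\pi^{-1}(Z)\subset\Gamma_{\sigma}$ guarantees that $\rho$ does land in $U$.

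For the component count, the flop induces a bijection between the irreducible geometric components of $e'\subset\Gamma_{\sigma}$ and those of $e^{+}\subset X^{+}$, and $\theta|_{e^{+}}$ is an isomorphism onto its image $C$ sending distinct components to distinct components. By the type classification in \S\ref{subsec:Quadric-involutions}, the support of $Z'$ has the same cardinality as that of $Z$ in each of the three types, and the components of $e'$ are in bijection with the support of $Z'$; combining these bijections yields the stated matching with the components of $Z$. The main obstacle is the set-theoretic bookkeeping establishing $\theta^{-1}(C)=\Gamma_{\sigma}^{+}\cup e^{+}$ together with the precise behavior of $\Gamma_{\sigma}$ under the flop; once this identification is secured, the $\mathbb{A}^{1}$-bundle structure is a formal consequence of the projective bundle construction underlying $\pi:\mathbb{P}(\mathcal{E})\to\mathbb{P}_{k}^{2}$.
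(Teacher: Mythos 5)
Your proposal is correct and follows essentially the same route as the paper: identify $Y\setminus C$ with $\mathbb{P}(\mathcal{E})\setminus\Gamma_{\sigma}$ by running the Sarkisov link of Proposition \ref{prop:DoubleProj-SarkisovLink} backwards, then observe that $\pi$ restricted to this complement is a Zariski locally trivial $\mathbb{A}^{1}$-bundle over $\mathbb{P}_{k}^{2}\setminus Z$. You merely fill in more of the details (the set-theoretic identification $\theta^{-1}(C)=\Gamma_{\sigma}^{+}\cup e^{+}$, the section of $\pi$ over $\mathbb{P}_{k}^{2}\setminus Z$ coming from the restriction of the sequence (\ref{eq:Bundle-sequence}), and the component count via the type classification of \S\ref{subsec:Quadric-involutions}) than the paper's two-sentence argument does.
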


\begin{proof}
Indeed, the birational map $\xi=\theta\circ\varphi:\mathbb{P}(\mathcal{E})\dashrightarrow Y$
constructed in Proposition \ref{prop:DoubleProj-SarkisovLink} restricts
to an isomorphism between $Y\setminus C$ and the complement of the
proper transform $\Gamma$ in $\mathbb{P}(\mathcal{E})$ of the exceptional
divisor of the blow-up of $Y$ at $y$. On the other hand, it follows
from the construction of $\mathcal{E}$ in $\S$ \ref{subsec:ProjectiveBundle}
that $\Gamma$ is the graph of a quadratic birational involution $\sigma$
of $\mathbb{P}_{k}^{2}$ and that the restriction of $\pi:\mathbb{P}(\mathcal{E})\rightarrow\mathbb{P}_{k}^{2}$
to the complement of $\Gamma$ is a locally trivial $\mathbb{A}^{1}$-bundle
over the complement $\mathbb{P}_{k}^{2}\setminus Z$ of the base locus
$Z$ of $\sigma$. 
\end{proof}
\begin{thm}
Every $k$-form $Y$ of $V_{5}$ contains a Zariski open $\mathbb{A}_{k}^{2}$-cylinder.
Furthermore, $Y$ contains $\mathbb{A}_{k}^{3}$ as a Zariski open
subset if and only if it contains a special line defined over $k$. 
\end{thm}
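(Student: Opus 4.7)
The plan is to apply the preceding theorem to a suitably chosen $k$-rational point $y\in Y$, producing the Zariski locally trivial $\mathbb{A}^1_k$-bundle $\rho\colon Y\setminus C\to\mathbb{P}^2_k\setminus Z$, and thereby to reduce the problem of finding an $\mathbb{A}^r_k$-cylinder in $Y$ to finding an open subset $V\subset\mathbb{P}^2_k\setminus Z$ over which $\rho$ is trivial and which is itself an $\mathbb{A}^1_k$-cylinder (for $r=2$) or isomorphic to $\mathbb{A}^2_k$ (for $r=3$). The triviality of $\rho$ over such $V$ is automatic whenever $V$ is an affine variety with trivial Picard group, since $\rho$ is Zariski locally trivial and $H^1(V,\mathbb{G}_m)=\mathrm{Pic}(V)=0$ together with $H^1(V,\mathbb{G}_a)=H^1(V,\mathcal{O}_V)=0$ force any $\mathbb{A}^1_k$-bundle on $V$ to be globally trivial. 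The essential geometric input will be Proposition~\ref{prop:VSP-lines}(iii), which under the identification $\mathcal{H}(Y)\simeq\mathbb{P}^2_k$ tells us that the lines of $Y$ meeting a given line $\ell$ are parametrized by the polar line $p_\ell^\perp$ of $p_\ell$ with respect to the conic $C\subset\mathcal{H}(Y)$ of special lines.

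For the $\mathbb{A}^2_k$-cylinder assertion, I pick a $k$-line $\ell$ with trivial normal bundle (which exists by Lemma~\ref{lem:Forms-General-Line}(a)) and a $k$-point $y\in\ell$. The corresponding $p_\ell\in Z$ is a reduced $k$-component, so $Z\setminus\{p_\ell\}$ is a length-two $k$-subscheme which is contained in a unique $k$-line $M$; by Proposition~\ref{prop:VSP-lines}(iii) this $M$ coincides with the polar $p_\ell^\perp$. Since $\ell$ is not special, $p_\ell\notin C$, hence $p_\ell\notin M$; choosing any $k$-line $L$ through $p_\ell$ distinct from $M$, we get $Z\subset L\cup M$ and $V:=\mathbb{P}^2_k\setminus(L\cup M)\simeq\mathbb{G}_{m,k}\times\mathbb{A}^1_k$, an affine variety with trivial Picard group. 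Trivialization of $\rho$ over $V$ yields $\rho^{-1}(V)\simeq\mathbb{G}_{m,k}\times\mathbb{A}^2_k$ as the desired $\mathbb{A}^2_k$-cylinder. For the \emph{if} direction of the second assertion, I instead take $\ell_0$ to be a special $k$-line and pick $y\in\ell_0(k)$ generic enough to lie off the $k$-form of $C_6$ (possible since $\ell_0(k)$ is infinite while its intersection with the $k$-form of $C_6$ is finite): then $p_{\ell_0}\in C$ is $k$-rational, the polar $M=T_{p_{\ell_0}}C$ passes through $p_{\ell_0}$, and a Type II analysis in the sense of $\S$\ref{subsec:Quadric-involutions}, together with Proposition~\ref{prop:VSP-lines}(iii) applied to the residual non-special line through $y$, shows $Z\subset T_{p_{\ell_0}}C$. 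Hence $V:=\mathbb{P}^2_k\setminus T_{p_{\ell_0}}C\simeq\mathbb{A}^2_k$ is contained in $\mathbb{P}^2_k\setminus Z$, and trivialization of $\rho$ over this $\mathbb{A}^2_k$-base yields $\rho^{-1}(V)\simeq\mathbb{A}^3_k$ as a Zariski open in $Y$.

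The main obstacle is the \emph{only if} direction: given an $\mathbb{A}^3_k$-open $U\subset Y$, producing a special $k$-line. The plan is to reverse the previous construction by choosing a $k$-point $y\in U$, applying the Sarkisov link from $y$, and analyzing the restriction of $\rho$ to $U\setminus C$. Picard-group considerations ($\mathrm{Pic}(U)=0$, $C$ being of codimension two in $U$) suggest, via a Bass--Connell--Wright type cancellation argument applied to a suitable affine principal open of $U$, that the image $V$ of this restriction should be $k$-isomorphic to an open of $\mathbb{A}^2_k$, forcing $V\subset\mathbb{P}^2_k\setminus L$ for some $k$-line $L$ and hence $Z\subset L$. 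By the Type classification of $\S$\ref{subsec:Quadric-involutions} together with Proposition~\ref{prop:VSP-lines}(iii), $Z\subset L$ for a $k$-line $L$ requires $y$ to lie on a special $k$-line. The delicate step is that $U\setminus C$ is typically not affine, so the cancellation must be implemented on a carefully chosen affine open; alternatively, one can invoke Furushima--Nakayama-type classification of $\mathbb{A}^3$-compactifications of $V_5$ to identify the boundary divisor $D=Y\setminus U$ with a $k$-form of the tangential scroll $\overline{S}_2$ and show, via Galois descent, that $\mathrm{Sing}(D)\simeq C$ acquires a $k$-rational point, which by Lemma~\ref{lem:Forms-General-Line}(b) is equivalent to the existence of a special $k$-line on $Y$.
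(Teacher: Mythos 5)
Your treatment of the first assertion and of the \emph{if} direction of the second is essentially workable, though it differs from the paper's in instructive ways. For the $\mathbb{A}_{k}^{2}$-cylinder the paper does not use the double projection at all: it projects from a $k$-line $\ell$ with trivial normal bundle (Lemma \ref{lem:Forms-General-Line} a)) onto a smooth quadric $Q\subset\mathbb{P}_{k}^{4}$ as in Corollary \ref{cor:FormRational-Divisible}, identifies the complement of the surface of lines meeting $\ell$ with $Q\setminus Q_{0}\simeq\{xv-yu=1\}$, and writes down the explicit cylinder $\mathrm{Spec}(k[x^{\pm1}][y,u])$. Your route through $\psi_{y}$ can be made to work, but you are importing more than you need and more than is proved: the identification of the base locus $Z$ of $\sigma$ with a subscheme of $\mathcal{H}(Y)$ and the appeal to Proposition \ref{prop:VSP-lines} (iii) are never established (that proposition concerns the Hilbert scheme, not the target of the double projection), and the claim that $Z$ has a reduced $k$-rational component $p_{\ell}$ needs a Galois-equivariant correspondence between components of $Z$ and lines through $y$, whereas the preceding theorem only matches their numbers. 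Fortunately none of this is essential: once one such reduced $k$-point of $Z$ is granted, the residual length-$2$ subscheme spans a unique $k$-line $M$ and any $k$-line $L$ through $p_{\ell}$ gives $\mathrm{Supp}(Z)\subset L\cup M$, whether or not $p_{\ell}\in M$. Similarly, for the \emph{if} direction the paper argues more directly: if $y$ lies on a special line then $C$ has at most two geometric components, so $\mathrm{Supp}(Z)$ consists of at most two points and is contained in a $k$-line $L$, and $\rho^{-1}(\mathbb{P}_{k}^{2}\setminus L)\simeq\mathbb{A}_{k}^{3}$; your Type II analysis and the identification of $L$ with $T_{p_{\ell_{0}}}C$ are superfluous.

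The \emph{only if} direction is where your proposal has a genuine gap. Your first alternative is not a proof: the cancellation step is only ``suggested,'' you concede that $U\setminus C$ is not affine, and even granting that $V=\rho(U\setminus C)$ is $k$-isomorphic to an open subset of $\mathbb{A}_{k}^{2}$, the conclusion $V\subset\mathbb{P}_{k}^{2}\setminus L$ does not follow --- triviality of $\mathrm{Pic}(V)$ only forces the degrees of the boundary curves of $V$ in $\mathbb{P}_{k}^{2}$ to be coprime, not that one of them is a line. Your second alternative is the right idea (and is the paper's actual argument) but is misstated in a way that matters: by \cite[Corollary 1.2, b]{FuNa89} and \cite{FuNa89-2}, the boundary $D_{\overline{k}}$ of an $\mathbb{A}_{\overline{k}}^{3}$ in $V_{5}$ is a possibly non-normal quintic del Pezzo surface, i.e.\ a hyperplane section, not a form of the anticanonical tangential scroll $\overline{S}_{2}$, and its singular locus is either a single $A_{4}$-point $q$ or a single special line $\ell$ --- in no case a copy of the conic $C\subset\mathcal{H}(Y)$, so Lemma \ref{lem:Existence-of-specialLines} b) is not being applied to the right object. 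The correct descent argument treats two cases: in the non-normal case $\ell=\mathrm{Sing}(D_{\overline{k}})$ is Galois-stable, hence a special line defined over $k$; in the normal case, which your sketch omits entirely, $q$ is a $k$-rational point of $D$ and the \emph{unique} special line of $Y_{\overline{k}}$ through $q$ is Galois-stable, hence defined over $k$.
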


\begin{proof}
By Lemma \ref{lem:Forms-General-Line}, $Y$ contains a line $\ell\simeq\mathbb{P}_{k}^{1}$
with trivial normal bundle. Projecting from $\ell$ as in the proof
of Corollary \ref{cor:FormRational-Divisible}, we obtain a birational
map $Y\dashrightarrow Q$ onto a smooth quadric $Q\subset\mathbb{P}_{k}^{4}$
defined over $k$, which restricts to an isomorphism between the complement
of the surface $Z\subset Y$ defined over $k$ swept out by the lines
in $Y_{\overline{k}}$ intersecting $\ell_{\overline{k}}$ and the
complement of a hyperplane section $Q_{0}\simeq\mathbb{P}_{k}^{1}\times\mathbb{P}_{k}^{1}$
of $Q$. The complement $Q\setminus Q_{0}$ is isomorphic to the smooth
affine quadric $\tilde{Q}_{0}=\{xv-yu=1\}\subset\mathbb{A}_{k}^{4}$,
which contains for instance the principal open $\mathbb{A}_{k}^{2}$-cylinder
$\tilde{Q}_{0,x}\simeq\mathrm{Spec}(k[x^{\pm1}][y,u])$. 

By Lemma \ref{lem:Existence-of-specialLines} b), $Y$ contains a
special line if and only it has a $k$-rational point $p$ through
which there exists at most two lines in $Y_{\overline{k}}$. Letting
$C$ be the union of these lines, the previous theorem implies that
$\rho:Y\setminus C\rightarrow\mathbb{P}_{k}^{2}\setminus Z$ is a
Zariski locally trivial $\mathbb{A}^{1}$-bundle over the complement
of a $0$-dimensional closed subset $Z$ defined over $k$ whose support
consists of at most two points. Letting $L\simeq\mathbb{P}_{k}^{1}$
be any line in $\mathbb{P}_{k}^{2}$ containing the support of $Z$,
the restriction of $\rho$ over $\mathbb{P}_{k}^{2}\setminus L\simeq\mathbb{A}_{k}^{2}$
is then a trivial $\mathbb{A}^{1}$-bundle, so that $Y\setminus\rho^{-1}(L)$
is a Zariski open subset of $Y$ isomorphic to $\mathbb{A}_{k}^{3}$. 

Conversely, suppose that $Y$ contains a Zariski open subset $U\simeq\mathbb{A}_{k}^{3}$.
Since $U$ is affine, $D=Y\setminus U$ has pure codimension $1$
in $Y$. Since $Y_{\overline{k}}\simeq V_{5}$ and $Y_{\overline{k}}\setminus D_{\overline{k}}\simeq U_{\overline{k}}\simeq\mathbb{A}_{\overline{k}}^{3}$,
according to \cite[Corollary 1.2, b]{FuNa89} and \cite{FuNa89-2},
we have the following alternative for the divisor $D_{\overline{k}}$
in $Y_{\overline{k}}$: 

1) $D_{\overline{k}}$ is a normal del Pezzo surface of degree $5$
with a unique singular point $q$ of type $A_{4}$,

2) $D_{\overline{k}}$ is a non-normal del Pezzo surface of degree
$5$ whose singular locus is a special line $\ell$ in $Y_{\overline{k}}$,
and $D_{\overline{k}}$ is swept out by lines in $V_{5}$ which intersect
$\ell$. 

In the first case, there exists a unique special line $\ell$ in $Y_{\overline{k}}$
passing through $q$, which is then automatically contained in $D_{\overline{k}}$.
Since $D$ is defined over $k$ and $q$ is the unique singular point
of $D_{\overline{k}}$, $q$ corresponds to a $k$-rational point
of $D$, so that $\ell$ is actually a special line in $Y$ defined
over $k$. In the second case, since $\ell$ is the singular locus
of $D_{\overline{k}}$, it is invariant under the action of the Galois
group $\mathrm{Gal}(\overline{k}/k)$, hence is defined over $k$.
So in both cases, $Y$ contains a special line defined over $k$. 
\end{proof}
\begin{cor}
\label{cor:A3-overC1-field}Every form of $V_{5}$ defined over a
$C_{1}$-field $k$ contains $\mathbb{A}_{k}^{3}$ as a Zariski open
subset. 
\end{cor}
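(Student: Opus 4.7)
The plan is to combine the preceding theorem with Lemma \ref{lem:Existence-of-specialLines} to reduce the corollary to a statement about the existence of a $k$-rational point on a single conic. By the previous theorem, a $k$-form $Y$ of $V_{5}$ contains $\mathbb{A}_{k}^{3}$ as a Zariski open subset if and only if $Y$ contains a special line defined over $k$. By the equivalence $(i) \Leftrightarrow (ii)$ in Lemma \ref{lem:Existence-of-specialLines} b), this in turn holds if and only if the curve $C \subset \mathcal{H}(Y)$ parametrizing special lines admits a $k$-rational point. Thus the entire task is to show that when $k$ is a $C_{1}$-field, $C(k) \neq \emptyset$.

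To exploit the $C_{1}$-property of $k$, I first need to realize $C$ concretely as a plane conic. Since $C$ is a smooth projective $k$-curve with $C_{\overline{k}} \simeq \mathbb{P}_{\overline{k}}^{1}$, it is a smooth genus zero curve defined over $k$. Its canonical sheaf $\omega_{C}$ is defined over $k$ with $\deg(-K_{C}) = 2$, and Riemann--Roch together with Serre duality give $h^{0}(C,\omega_{C}^{\vee}) = 3$ and $h^{1}(C,\omega_{C}^{\vee}) = 0$, so $|{-K_{C}}|$ yields a closed embedding $C \hookrightarrow \mathbb{P}_{k}^{2}$ as a smooth conic. Equivalently, $C$ is a Brauer--Severi curve over $k$.

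To conclude, I apply the definition of a $C_{1}$-field: any homogeneous form of degree $d$ in strictly more than $d$ variables over $k$ has a nontrivial zero in $k$. The embedded conic $C \subset \mathbb{P}_{k}^{2}$ is cut out by a single homogeneous form of degree $2$ in $3$ variables, hence has a $k$-rational point. (Alternatively, one may invoke the vanishing of $\mathrm{Br}(k)$ for $C_{1}$-fields to conclude that the Brauer--Severi curve $C$ is $k$-isomorphic to $\mathbb{P}_{k}^{1}$.) Either way, $C(k)$ is nonempty, which by Lemma \ref{lem:Existence-of-specialLines} produces a special line in $Y$ defined over $k$, and the preceding theorem then provides the desired Zariski open $\mathbb{A}_{k}^{3}$-cylinder.

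There is no real obstacle to this argument: all of the substantive geometric content has been packaged into the previous theorem and lemma, and the corollary follows from their combination with the standard Tsen-type property of $C_{1}$-fields applied to a conic. The only mild subtlety worth pointing out is the verification that $C$ really is a conic and not merely a form of $\mathbb{P}^{1}$ in some abstract sense, which is handled by the brief Riemann--Roch computation above.
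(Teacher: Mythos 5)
Your proof is correct and takes essentially the same route as the paper: reduce via the preceding theorem and Lemma \ref{lem:Existence-of-specialLines} b) to the statement that the conic $C$ parametrizing special lines has a $k$-rational point, then invoke the $C_{1}$-property of $k$. The paper's proof is a one-liner asserting exactly this; your additional Riemann--Roch verification that the genus-zero curve $C$ really embeds anticanonically as a plane conic is a harmless elaboration of what the paper leaves implicit.
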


\begin{proof}
This follows from Lemma \ref{lem:Existence-of-specialLines} b) and
the previous theorem since every conic over a $C_{1}$-field has a
rational point. 
\end{proof}
\begin{example}
The $\mathbb{C}(s,t)$-form $Y=\mathrm{VSP}(f)$ of $V_{5}$ associated
to the quadratic form $f=x^{2}+sy^{2}+tz^{2}\in\mathbb{C}(s,t)[x,y,z]$
considered in Example \ref{exa:form-without-special-line} does not
contain any special line defined over $\mathbb{C}(s,t)$, hence does
not contain $\mathbb{A}_{\mathbb{C}(s,t)}^{3}$ as a Zariski open
subset. 
\end{example}

\bibliographystyle{amsplain} 

\providecommand{\bysame}{\leavevmode\hbox to3em{\hrulefill}\thinspace}
\providecommand{\MR}{\relax\ifhmode\unskip\space\fi MR }
\providecommand{\MRhref}[2]{%
  \href{http://www.ams.org/mathscinet-getitem?mr=#1}{#2}
}
\providecommand{\href}[2]{#2}
\begin{thebibliography}{}

\end{thebibliography}


\begin{thebibliography}{99}

\bibitem{BCHM} C. Birkar, P. Cascini, C.D. Hacon and J. M${}^{\rm c}$Kernan,  \emph{Existence of minimal models for varieties of log general type}, Journal of AMS, 23 (2010), 405-468.  


\bibitem{DK3} A. Dubouloz and T. Kishimoto, \emph{Explicit biregular/birational geometry of affine threefolds: completions of $\mathbb{A}^3$ into del Pezzo fibrations and Mori conic bundles}, Adv. Stud. Pure Math. 75, Mathematical Society of Japan, Tokyo, 2017, 49-71.

\bibitem{DK4}  A. Dubouloz and T. Kishimoto,  \emph{Cylinders in del Pezzo fibrations}, 
arXiv:1607.00840v1 [math.AG], 2016, Israel Journal of Mathematics DOI 10.1007/s11856-018-1679-z (to appear). 

\bibitem{DK5} A. Dubouloz and T. Kishimoto,  \emph{Deformations of $\mathbb{A}^1$-cylindrical varieties}, arXiv:1710.09108v1 [math.AG] 25 Oct 2017.  

\bibitem{Fu86} M. Furushima,  \emph{Singular del Pezzo surfaces and analytic compactifications of $3$-dimensional complex affine space $\mathbb{C}^3$}, Nagoya Math. J., \ 104 (1986), 1-28.

\bibitem{Fu93} M. Furushima,  \emph{ The complete classification of compactifications of $\mathbb{C}^3$ which are projective manifolds with the second Betti number one}, Math. Ann., \ 297 (1993), 627-662.

\bibitem{FuNa89} M. Furushima and N. Nakayama, \emph{The family of lines on the Fano threefolds $V_5$}, Nagoya Math. J., \ 116 (1989), 111-122. 

\bibitem{FuNa89-2} M. Furushima and N. Nakayama, \emph{A new construction of a compactification of $\mathbb{C}^3$}, T\^ohoku Math. J., \ 41 (1989), 543-560. 

\bibitem{Il94} A. Iliev,  \emph{Lines  on  the  Gushel' threefold}, Indag.  Mathem.,  N.S.,  5  (3) (1994),  307-320.

\bibitem{Il94-2} A. Iliev, \emph{The Fano surface of the Gushel threefold}, Compositio Mathematica 94, $n^{o} 1$ (1994), 81-107.



\bibitem{Is80} V.A. Iskovskikh, \emph{Anticanonical models of three-dimensional algebraic varieties}, J Math Sci (1980) 13 (6), 745-814.

\bibitem{KPZ1} T.~Kishimoto, Y.~Prokhorov and M.~Zaidenberg, \emph{Group actions on affine cones}. Affine algebraic geometry, 123-163. Peter Russell's Festschrift, CRM Proc.\ Lecture Notes, 54, Amer. Math.  Soc., Providence, RI, 2011. 

\bibitem{KPZ2} T.~Kishimoto, Y.~Prokhorov and M.~Zaidenberg, \emph{$\mathbb{G}_a$-actions on affine cones}, Transformation Groups, 18 (2013), 1137-1153. 

\bibitem{KPZ4} T.~Kishimoto, Y.~Prokhorov and M.~Zaidenberg, \emph{Affine cones over Fano threefolds and additive group actions}, Osaka J. Math.,  51 (2014), 1093-1112.  

\bibitem{KoPr92} I.Y. Kolpakov-Miroshnichenko and Y.G. Prokhorov, \emph{Rationality construction of fields of invariants of some finite four-dimensional linear groups associated with Fano threefolds}, Math Notes  51 (1) (1992), 74-76.

\bibitem{KPS18} A. G. Kuznetsov, Y. G. Prokhorov and C. A. Shramov, \emph{Hilbert schemes of lines and conics and automorphism groups of Fano threefolds}, Jpn. J. Math. (2018) 13 (1), 109-185.

\bibitem{Mi64} A. Micali, \emph{Sur les alg\`ebres universelles}, Annales de l'Institut Fourier, Volume 14 (1964) no. 2 , p. 33-87. 

\bibitem{MoMu83} S. Mori and S. Mukai, \emph{On Fano $3$-folds with $B_2 \geqq 2$}, Advanced Studies in Pure Mathematics, \ 1 (1983), 101-129. 

\bibitem{MuUm83} S. Mukai and H. Umemura, \emph{Minimal rational threefolds}, Lect. Notes Math.,  1016 (1983), 490-518.  

\bibitem{Pr} Yu. Prokhorov, \emph{Fano threefolds of genus $12$ and compactifications of $\mathbb{C}^3$}, St. Petersburg Math. J., 3 (1992), 855-864.  

\bibitem{PZ14}  Y.~Prokhorov, M.~Zaidenberg, \emph{Examples of cylindrical Fano fourfolds}, Eur. J. Math., 2 (2016), 262-282. 

\bibitem{PZ15} Y. Prokhorov, M. Zaidenberg, \emph{New examples of cylindrical Fano fourfolds}, New examples of cylindrical Fano fourfolds, Adv. Stud. Pure Math. 75, Mathematical Society of Japan, Tokyo, 2017, 443-463.

\bibitem{Se61} J.P. Serre, \emph{Sur les modules projectifs}, S\'eminaire Dubreil-Pisot (1960/61), Secr. Math. Paris, expos\'e 2, (1961). 

\bibitem{Ta89} K. Takeuchi, \emph{Some birational maps of Fano 3-folds}, Compositio Math., 71 (1989), 265-283.

\end{thebibliography}

\end{document}